\newtheorem{theorem}{Theorem}[section]
\newtheorem{lemma}[theorem]{Lemma}
\newtheorem{corollary}[theorem]{Corollary}
\newtheorem{proposition}[theorem]{Proposition}
\title[The torsion subgroup of the additive group of a ring]{
The torsion subgroup of the additive group of a Lie nilpotent associative ring of class $3$}
\author{Galina Deryabina}
\address{Department of Computational Mathematics and Mathematical Physics (FS-11), Bauman Moscow State Technical University, 2-nd Baumanskaya Street, 5, 105005 Moscow, Russia}
\email{galina\_deryabina@mail.ru}
\author{Alexei Krasilnikov}
\address{Departamento de Matem\'atica, Universidade de Bras\'\i lia, 70910-900 Bras\'\i lia, DF, Brazil}
\email{alexei@unb.br}
\date{}
\begin{document}

\begin{abstract}
Let $\mathbb Z \langle X \rangle$ be the free unital associative ring freely generated by an infinite countable set $X = \{ x_1,x_2, \dots \}$. Define a left-normed commutator $[x_1,x_2, \dots , x_n]$ by $[a,b] = ab - ba$, $[a,b,c] = [[a,b],c]$. For $n \ge 2$, let $T^{(n)}$ be the two-sided ideal in $\mathbb Z \langle X \rangle$ generated by all commutators $[a_1,a_2, \dots , a_n]$ $( a_i \in \mathbb Z \langle X \rangle )$. Let $T^{(3,2)}$ be the two-sided ideal of the ring $\mathbb Z \langle X \rangle$ generated by all elements $[a_1, a_2, a_3, a_4]$ and $[a_1, a_2] [a_3, a_4, a_5]$ $(a_i \in \mathbb Z \langle X \rangle)$.

It has been recently proved in \cite{Kras12} that the additive group of $\mathbb Z \langle X \rangle / T^{(4)}$ is a direct sum $ A \oplus B$ where $A$ is a free abelian group isomorphic to the additive group of $\mathbb Z \langle X \rangle / T^{(3,2)}$ and $B = T^{(3,2)} /T^{(4)}$ is an elementary abelian $3$-group. A basis of the free abelian summand $A$ was described explicitly in \cite{Kras12}. The aim of the present article is to find a basis of the elementary abelian $3$-group $B$.
\end{abstract}

\maketitle

\section{Introduction}

Let $\mathbb Z \langle X \rangle$ be the free unital associative ring freely generated by an infinite countable set $X = \{ x_i \mid i \in \mathbb N \}$. Then $\mathbb Z \langle X \rangle$ is the free $\mathbb Z$-module with a basis $\{ x_{i_1} x_{i_2} \dots x_{i_k} \mid k \ge 0, i_l \in \mathbb N \}$ formed by the non-commutative monomials in $x_1, x_2, \dots .$ Define a left-normed commutator $[x_1,x_2, \dots , x_n]$ by $[a,b] = ab - ba$, $[a,b,c] = [[a,b],c]$. For $n \ge 2$, let $T^{(n)}$ be the two-sided ideal in $\mathbb Z \langle X \rangle$ generated by all commutators $[a_1,a_2, \dots , a_n]$ $( a_i \in \mathbb Z \langle X \rangle )$. Note that the quotient ring $\mathbb Z \langle X \rangle / T^{(n)}$is the universal Lie nilpotent associative ring of class $n-1$ generated by $X$.

It is clear that the quotient ring $\mathbb Z \langle X \rangle /T^{(2)}$ is isomorphic to the ring $\mathbb Z [X]$ of commutative polynomials in $x_1, x_2, \dots .$ Hence, the additive group of $\mathbb Z \langle X \rangle /T^{(2)}$ is free abelian  and its basis is formed by the (commutative) monomials. Recently Bhupatiraju, Etingof, Jordan, Kuszmaul and Li \cite{BEJKL12} have proved that the additive group of $\mathbb Z \langle X \rangle /T^{(3)}$ is also free abelian and found explicitly its basis \cite[Prop. 3.2]{BEJKL12}. Very recently in \cite{Kras12} the second author of the present article has proved that the additive group of $\mathbb Z \langle X \rangle / T^{(4)}$ is a direct sum $A \oplus B$ of a free abelian group $A$ and an elementary abelian $3$-group $B$.

More precisely, let $T^{(3,2)}$ be the two-sided ideal of the ring $\mathbb Z \langle X \rangle$ generated by all elements $[a_1, a_2, a_3, a_4]$ and $[a_1, a_2] [ a_3, a_4, a_5]$ where $a_i \in \mathbb Z \langle X \rangle$. Clearly, $T^{(4)} \subset T^{(3,2)}$. It has been proved in \cite{Kras12} that $T^{(3,2)} / T^{(4)}$ is a non-trivial elementary abelian $3$-group and the additive group of $\mathbb Z \langle X \rangle / T^{(3,2)}$ is free abelian. It follows that the additive group of $\mathbb Z \langle X \rangle / T^{(4)}$ is a direct sum $A \oplus B$ where $B = T^{(3,2)}/T^{(4)}$ is an elementary abelian $3$-group and $A$ is a free abelian group isomorphic to the additive group of $\mathbb Z \langle X \rangle / T^{(3,2)}$. A $\mathbb Z$-basis of the additive group of $\mathbb Z \langle X \rangle / T^{(3,2)}$ was explicitly written in \cite[Lemma 5.6]{Kras12}; this basis can be easily deduced from the results of either \cite{EKM09} or \cite{Gordienko07} or \cite{Volichenko78}. The aim of the present article is to find a basis over $\mathbb F_3 = \mathbb Z / 3 \, \mathbb Z$ of the elementary abelian $3$-group $T^{(3,2)} / T^{(4)}$.

Let $v_k = [x_1, x_2] \dots [ x_{2k-1}, x_{2k}] [x_{2k+1}, x_{2k+2}, x_{2k+3}]$ $(k \ge 1)$. One can deduce from the results of \cite{EKM09, Gordienko07, Volichenko78} that $3 \, v_1 \in T^{(4)}$; on the other hand, it was proved in \cite[Theorem 1.1]{Kras12} that $v_1 \notin T^{(4)}$. Since $3 \, v_1 \in T^{(4)}$, we have $3 \, v_k \in T^{(4)}$ for all $k$. Our first main result is as follows.

\begin{theorem}\label{maintheorem1}
For all $k \ge 1$, $v_k \notin T^{(4)}$.
\end{theorem}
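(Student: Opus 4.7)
The plan is to build on the base case $v_1\notin T^{(4)}$ from \cite[Theorem 1.1]{Kras12}. The first step is to record a working tool: modulo $T^{(4)}$, products of commutators commute with each other, since
\[
[[a,b],[c,d]] = [a,b,c,d]-[a,b,d,c]\in T^{(4)}
\]
by the Jacobi identity, and analogously $[[a,b],[c,d,e]]\in T^{(4)}$ by an expansion of the same shape. Consequently the factors of $v_k$ may be reordered freely modulo $T^{(4)}$, and in particular $v_k$ is congruent mod $T^{(4)}$ to the ``separated'' product $[x_{2k+1},x_{2k+2},x_{2k+3}]\cdot[x_1,x_2]\cdots[x_{2k-1},x_{2k}]$, which puts the triple commutator in a fixed position.

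To pass from $v_{k-1}\notin T^{(4)}$ to $v_k\notin T^{(4)}$, I would attempt a substitution/tensor-product argument: form the Lie nilpotent class-$3$ ring $R=\mathbb{Z}[t_1,\dots,t_{k-1}]\otimes_{\mathbb{Z}}(\mathbb{Z}\langle X\rangle/T^{(4)})$, which is again Lie nilpotent of class $3$ because $\mathbb{Z}[t_1,\dots,t_{k-1}]$ is commutative, and look for a ring homomorphism $\mathbb{Z}\langle X\rangle\to R$ sending $v_k$ to $(t_1\cdots t_{k-1})\otimes v_{k-1}$ up to a nonzero scalar multiple. Since $\mathbb{Z}[t_1,\dots,t_{k-1}]$ is faithfully flat over $\mathbb{Z}$, the inductive hypothesis $v_{k-1}\neq 0$ in $\mathbb{Z}\langle X\rangle/T^{(4)}$ would then force $v_k\neq 0$.

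The main obstacle is constructing such a substitution cleanly: the naive choices either send $v_k$ back to a copy of itself (if one simply tensors the identity substitution with scalars) or collapse distinct variables into an expression like $[x_1,x_2]^k[x_3,x_4,x_5]$, whose non-vanishing is not obviously easier to establish than that of $v_k$. A working alternative, and the one I would pursue in parallel, is to construct directly an $\mathbb{F}_3$-linear functional $\Phi_k:\mathbb{Z}\langle X\rangle\to\mathbb{F}_3$ annihilating $T^{(4)}$ with $\Phi_k(v_k)\neq 0$; this is equivalent to showing that the image of $v_k$ in the $\mathbb{F}_3$-vector space $B=T^{(3,2)}/T^{(4)}$ is nonzero. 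Working in the multilinear component $V_{2k+3}$, which becomes an $\mathbb{F}_3 S_{2k+3}$-module after reduction mod $3$, one can hope to pick out the isotypic component of $V_{2k+3}\cap T^{(3,2)}$ that contains $v_k$ and define $\Phi_k$ via projection onto this component, using the explicit basis of $\mathbb{Z}\langle X\rangle/T^{(3,2)}$ available from \cite{EKM09,Gordienko07,Volichenko78}. Verifying that this projection indeed vanishes on $T^{(4)}$, rather than merely on $T^{(3,2)}$, is where I expect the bulk of the technical work to lie.
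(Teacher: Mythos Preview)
Your proposal is a programme, not a proof: neither of the two strategies you outline is carried to completion, and you acknowledge this yourself.

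The inductive tensor-product route has a genuine gap. You need a ring homomorphism from $\mathbb Z\langle X\rangle$ into a class-$3$ Lie nilpotent ring sending $v_k$ to a nonzero multiple of $v_{k-1}$, but you do not produce one. The difficulty is real: the $2k+3$ variables in $v_k$ are all distinct, and any substitution that identifies some of them or replaces them by elements of the form $t\otimes x_j$ either leaves $v_k$ essentially unchanged or collapses it to an expression whose nonvanishing is no easier than the original problem. I do not see how to make this inductive reduction work without already knowing the structure of $T^{(4)}$ in the multilinear component of degree $2k+3$.

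Your second strategy, building an $\mathbb F_3$-functional $\Phi_k$ vanishing on $T^{(4)}$ with $\Phi_k(v_k)\ne 0$, is exactly what the paper does, and your instinct that ``verifying that this projection indeed vanishes on $T^{(4)}$ \dots\ is where the bulk of the technical work lies'' is correct. The paper resolves this by first proving an independent structural result (Theorem~\ref{maintheorem3}): $T^{(4)}$ is generated as a two-sided ideal by the commutators of length $4$, the products of two triple commutators, two explicit skew-symmetry relations of type $[\,\cdot\,,\cdot\,][\,\cdot\,,\cdot\,,\cdot\,]$, and one explicit symmetry relation of type $[\,\cdot\,,\cdot\,][\,\cdot\,,\cdot\,][\,\cdot\,,\cdot\,]$. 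With these generators in hand, the multilinear part $P_{2k+3}\cap T^{(4)}\cap\Gamma$ modulo the obvious subgroup $W'$ is generated by elements of the form $c_{i_1\dots i_{2k+3}} - \mathrm{sgn}(\sigma)\,c_{i_{\sigma(1)}\dots i_{\sigma(2k+3)}}$ together with Jacobi-type triples. The functional $\Phi_k$ is then simply $\Phi_k(c_{i_1\dots i_{2k+3}}) = \mathrm{sgn}\begin{pmatrix}1&\cdots&2k+3\\ i_1&\cdots&i_{2k+3}\end{pmatrix}$, reduced mod $3$: it kills the skew relations outright and sends each Jacobi triple to $\pm 3\equiv 0$, while $\Phi_k(v_k)=1$. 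No isotypic decomposition is needed; the sign character alone suffices. What you are missing is the generator list for $T^{(4)}$ that makes this verification finite and straightforward.
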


Our second main result describes the $3$-torsion subgroup $T^{(3,2)}/T^{(4)}$ of the additive group of $\mathbb Z \langle X \rangle / T^{(4)}$. Let
\begin{multline*}
\mathcal E = \Big\{ x_{j_1} \dots x_{j_l} \, [x_{i_1}, x_{i_2}] \dots [ x_{i_{2k-1}}, x_{i_{2k}}] [x_{i_{2k+1}}, x_{i_{2k+2}}, x_{i_{2k+3}}] \mid  \\
l \ge 0, \, k \ge 1, \, j_1 \le j_2 \le \dots \le j_l; \, i_1 < i_2 < \dots < i_{2k+3} \Big\} .
\end{multline*}

\begin{theorem}\label{maintheorem2}
The set $\{ e + T^{(4)} \mid e \in \mathcal E \}$ is a basis of the elementary abelian $3$-group $T^{(3,2)}/T^{(4)}$ over $\mathbb F_3 = \mathbb Z / 3 \, \mathbb Z$.
\end{theorem}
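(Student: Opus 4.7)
Since $T^{(3,2)}/T^{(4)}$ is elementary abelian of exponent $3$, as asserted in the introduction, we have $3\,T^{(3,2)} \subseteq T^{(4)}$ and may treat $T^{(3,2)}/T^{(4)}$ as an $\mathbb{F}_3$-vector space throughout. Both $T^{(3,2)}$ and $T^{(4)}$ are multi-homogeneous ideals of $\mathbb{Z}\langle X\rangle$, so this vector space decomposes as a direct sum of finite-dimensional multi-homogeneous components; it therefore suffices to verify both spanning and linear independence within each fixed multi-degree.

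For the spanning step I would first assemble the identities valid modulo $T^{(4)}$: every triple commutator $[a,b,c]$ is central (since $[u,[a,b,c]] = [a,b,c,u] \in T^{(4)}$); any two double or triple commutators commute modulo $T^{(4)}$, because $[[a,b],[c,d]] = [a,b,c,d] - [a,b,d,c]$; Jacobi and antisymmetry hold for the triple commutator; and the Leibniz rule $[ab,c] = a[b,c] + [a,c]b$ reduces commutators of monomials to commutators in single variables. Iterating these identities, any generator $p\,[a,b][c,d,e]\,q$ of $T^{(3,2)}$ can be rewritten modulo $T^{(4)}$ as a $\mathbb{Z}$-linear combination of products $m\,[x_{i_1}, x_{i_2}] \cdots [x_{i_{2k-1}}, x_{i_{2k}}][x_{i_{2k+1}}, x_{i_{2k+2}}, x_{i_{2k+3}}]$ in which $m$ is a monomial. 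The delicate sub-steps are (a) sorting $m$ into non-decreasing order, which produces extra commutator factors that must be absorbed into the commutator tail (thereby raising $k$ by one), and (b) eliminating repetitions among the commutator indices $i_1, \dots, i_{2k+3}$, for which the key tool is the relation $3\,v_1 \in T^{(4)}$ combined with Jacobi.

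For linear independence, fix a multi-degree $\mathbf{d} = (d_1, d_2, \dots)$ and let $N(\mathbf{d})$ count the elements of $\mathcal{E}$ of that multi-degree; these are parametrised by choices of a strictly increasing subset $\{i_1 < \dots < i_{2k+3}\}$ of the support of $\mathbf{d}$, which in turn fixes the residual monomial $m$. The goal is $\dim_{\mathbb{F}_3} I_{\mathbf{d}} = N(\mathbf{d})$, where $I_{\mathbf{d}}$ denotes the corresponding multi-homogeneous component of $T^{(3,2)}/T^{(4)}$. My plan is to combine Theorem~\ref{maintheorem1} with a specialisation argument: the multilinear component of $T^{(3,2)}/T^{(4)}$ in $2k+3$ variables should be at most one-dimensional under the symmetries imposed by antisymmetry and Jacobi, and generated by $v_k$, which is nonzero by Theorem~\ref{maintheorem1}. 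To transfer this information to arbitrary multi-degrees, I would construct for each $e \in \mathcal{E}$ a dual functional $\varphi_e : \mathbb{Z}\langle X\rangle/T^{(4)} \to \mathbb{F}_3$ by evaluating the $x_i$ into a suitable Lie-nilpotent test algebra (for instance a Grassmann-like $\mathbb{F}_3$-algebra built from exterior generators over $\mathbb{F}_3[t_1, t_2, \dots]$) and extracting a designated coefficient, arranging that $\varphi_e(e') = \delta_{e, e'}$ on $\mathcal{E}$.

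The principal obstacle is precisely this independence step. Theorem~\ref{maintheorem1} supplies only the non-vanishing of one distinguished element $v_k$ in each degree $2k+3$, and propagating this into independence of the whole family $\mathcal{E}$ in characteristic $3$ is delicate: the characteristic-zero polarisation map between multilinear and multi-homogeneous parts need not be injective modulo $3$, so either a modular replacement must be engineered, or a sufficiently rich supply of evaluation functionals $\varphi_e$ must be assembled by direct construction.
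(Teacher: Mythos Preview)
Your spanning argument is essentially on the right track and close in spirit to the paper's, which packages the same commutator manipulations via the skew-symmetry relations of Corollary~\ref{corollarytomaintheorem3} (Lemma~\ref{alternating}): modulo $T^{(4)}$ the product $[x_{i_1},x_{i_2}]\cdots[x_{i_{2k+1}},x_{i_{2k+2}},x_{i_{2k+3}}]$ is fully alternating in the indices, which both kills repeated indices (combine $2f\in T^{(4)}$ with $3f\in T^{(4)}$) and lets you sort the surviving ones.

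The linear independence step, however, is far easier than you fear, and the construction of dual functionals into test algebras is unnecessary. The point you are missing is the standard direct-sum decomposition
\[
\mathbb{Z}\langle X\rangle \;=\; \bigoplus_{s\ge 0;\ j_1\le\cdots\le j_s} x_{j_1}\cdots x_{j_s}\,\Gamma,
\]
where $\Gamma$ is the subring generated by all commutators (Lemma~\ref{directsum}). Because $T^{(4)}$ and $T^{(3,2)}$ are multihomogeneous $T$-ideals, each is generated as a \emph{left} ideal by its intersection with $\Gamma$ (Corollary~\ref{TcapGammaleft}), and Lemma~\ref{IcapGamma}/Corollary~\ref{GammaIcapGamma} then give
\[
T^{(3,2)}/T^{(4)} \;\cong\; \bigoplus_{s\ge 0;\ j_1\le\cdots\le j_s} x_{j_1}\cdots x_{j_s}\,\bigl(\Gamma^{(3,2)}/\Gamma^{(4)}\bigr),
\]
with each summand isomorphic to $\Gamma^{(3,2)}/\Gamma^{(4)}$. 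This immediately separates the elements of $\mathcal{E}$ according to their monomial prefix $x_{j_1}\cdots x_{j_l}$, reducing the problem to showing that the \emph{prefix-free} set
\[
\mathcal{E}' \;=\; \bigl\{\,[x_{i_1},x_{i_2}]\cdots[x_{i_{2k+1}},x_{i_{2k+2}},x_{i_{2k+3}}]\ :\ i_1<\cdots<i_{2k+3}\,\bigr\}
\]
is an $\mathbb{F}_3$-basis of $\Gamma^{(3,2)}/\Gamma^{(4)}$.

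Now the obstacle you identified evaporates: distinct elements of $\mathcal{E}'$ lie in \emph{distinct} multihomogeneous components (the strictly increasing index sequence is determined by the multiset of variables). Since $T^{(4)}$ is multihomogeneous, any $\mathbb{F}_3$-relation $\sum\alpha_i e'_i\in T^{(4)}$ forces $\alpha_i e'_i\in T^{(4)}$ for each $i$ separately; Theorem~\ref{maintheorem1} says $e'_i\notin T^{(4)}$, so $\alpha_i=0$. No polarisation, no test algebras, no dual functionals are needed. Your worry that ``$N(\mathbf{d})$ can exceed $1$'' is resolved not by analysing the multidegree components of $T^{(3,2)}/T^{(4)}$ directly, but by first splitting off the commutative monomial prefix via the $\Gamma$-decomposition.
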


This theorem confirms the conjecture concerning a basis of $T^{(3,2)}/T^{(4)}$ over $\mathbb F_3$ made in \cite[Remark 1.7]{Kras12}.

To prove Theorem \ref{maintheorem1} we use the following result that may be of independent interest.

\begin{theorem} \label{maintheorem3}
Let $K$ be an arbitrary unital associative and commutative ring and let $K \langle Y \rangle$ be the free associative $K$-algebra on a non-empty set $Y$ of free generators. Let $T^{(4)}$ be the two-sided ideal in $K \langle Y \rangle$ generated by all commutators $[a_1,a_2, a_3, a_4]$ $( a_i \in K \langle Y \rangle )$. Then the ideal $T^{(4)}$ is generated by the polynomials
\begin{gather}
\label{c4} [y_{1}, y_{2}, y_{3}, y_{4}] \qquad (y_i \in Y ),
\\
\label{c33} [y_{1}, y_{2}, y_{3}] [y_{4}, y_{5}, y_{6} ] \qquad (y_i \in Y ),
\\
\label{c32-1} [y_{1}, y_{2}] [y_{3}, y_{4}, y_{5}] + [y_{{1}}, y_{{5}}] [y_{{3}}, y_{{4}}, y_{{2}}] \qquad (y_i \in Y ),
\\
\label{c32-2} [y_{1}, y_{2}] [y_{3}, y_{4}, y_{5}] + [y_{{1}}, y_{{4}}] [y_{{3}}, y_{{2}}, y_{{5}}] \qquad (y_i \in Y ),
\\
\label{c222} ([y_{1}, y_{2}] [y_{3}, y_{4}] + [y_{{1}}, y_{{3}}] [y_{{2}}, y_{{4}}]) [y_{{5}}, y_{{6}}] \qquad (y_i \in Y).
\end{gather}
\end{theorem}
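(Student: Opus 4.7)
Let $I$ denote the two-sided ideal of $K \langle Y \rangle$ generated by the polynomials (\ref{c4})--(\ref{c222}). My plan is to prove $I = T^{(4)}$ by establishing each inclusion separately.

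The inclusion $I \subseteq T^{(4)}$ reduces to checking that each of (\ref{c4})--(\ref{c222}) lies in $T^{(4)}$. The first is immediate, and for the remaining four I would use the facts that $\mathrm{ad}(z)$ is a derivation of $K\langle Y \rangle$, that $\mathrm{ad}[z_1, z_2] = [\mathrm{ad}\, z_1, \mathrm{ad}\, z_2]$, and the Jacobi identity to expand suitable $4$-commutators (for instance $[y_1, y_2 y_5, y_3, y_4]$ in the case of (\ref{c32-1})) via the Leibniz rule and to isolate the desired expression modulo elements already known to lie in $T^{(4)}$.

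For the reverse inclusion $T^{(4)} \subseteq I$, it suffices to show that every commutator $[a_1, a_2, a_3, a_4]$ with $a_i \in K \langle Y \rangle$ lies in $I$. By $K$-linearity I may take each $a_i$ to be a monomial in $Y$, and proceed by induction on the total length $\ell(a_1) + \ell(a_2) + \ell(a_3) + \ell(a_4)$. The base case, where every $a_i$ is a single generator, is exactly (\ref{c4}). For the inductive step, suppose $a_1 = uv$ with $u, v$ strictly shorter monomials; since $\mathrm{ad}(a_2), \mathrm{ad}(a_3), \mathrm{ad}(a_4)$ are all derivations, applying the Leibniz rule three times to $[uv, a_2, a_3, a_4]$ expresses it as a sum of eight terms. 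The two ``boundary'' terms $[u, a_2, a_3, a_4]\, v$ and $u\, [v, a_2, a_3, a_4]$ lie in $I$ by the inductive hypothesis, and the remaining six ``mixed'' terms have the forms $[u, a_i][v, a_j, a_k]$ and $[u, a_i, a_j][v, a_k]$ with $\{i, j, k\} = \{2, 3, 4\}$.

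The crux, and principal obstacle, is to show that the sum of these six mixed terms lies in $I$. Running alongside the main induction, I would prove by a parallel induction on monomial lengths that each of the identities (\ref{c33}), (\ref{c32-1}), (\ref{c32-2}), (\ref{c222}), and the relation asserting that every $3$-commutator commutes modulo $I$ with every $2$-commutator, continues to hold when the variables $y_i$ are replaced by arbitrary monomials in $Y$. Granted this, the three $[u, a_i][v, a_j, a_k]$ mixed terms cancel in pairs via the substituted forms of (\ref{c32-1}) and (\ref{c32-2}), while the three $[u, a_i, a_j][v, a_k]$ mixed terms can be recast into $(2\text{-comm})\cdot(3\text{-comm})$ form using the substituted commutation relation and then cancelled similarly. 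Products of two $3$-commutators that arise when $u$ or $v$ themselves split further are absorbed into $I$ via (\ref{c33}), and products of three $2$-commutators via (\ref{c222}). The main technical difficulty is the delicate bookkeeping required to interleave the two inductions and to verify that every residual term is handled by one of the five generating identities.
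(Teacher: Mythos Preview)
Your high-level strategy---proving $I \subseteq T^{(4)}$ directly, then $T^{(4)} \subseteq I$ by a simultaneous induction on total degree showing that each of the identities (\ref{c4})--(\ref{c222}) continues to hold with arbitrary monomials substituted for the $y_i$---is exactly the approach the paper takes. However, your description of the inductive step for $[a_1,a_2,a_3,a_4]$ has two genuine gaps.

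First, you treat only the case $a_1 = uv$. Antisymmetry in $a_1,a_2$ reduces the case $\deg a_2 > 1$ to this one, but it does \emph{not} handle the situation where $a_1,a_2 \in Y$ while $a_3$ or $a_4$ is composite. Those sub-cases require separate arguments; the split $a_3 = a_3'a_3''$ in particular is non-trivial and forces you to invoke the degree-$m$ instance of the monomial version of (\ref{c32-1}), which must therefore be established \emph{before} the degree-$m$ instance of (\ref{c4}) within the same round of the induction. The ordering of the five cases at each degree is thus not merely bookkeeping but part of the logic.

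Second, your claim that ``the three $[u, a_i][v, a_j, a_k]$ mixed terms cancel in pairs via (\ref{c32-1}) and (\ref{c32-2})'' is false: three terms cannot cancel pairwise, and concretely
\[
[u,a_4][v,a_2,a_3] + [u,a_3][v,a_2,a_4] + [u,a_2][v,a_3,a_4] \equiv [u,a_2][v,a_3,a_4] \pmod{I},
\]
which is nonzero in general (take all entries in $Y$ and invoke Theorem~\ref{maintheorem1}). The same obstruction applies to the three $[u,a_i,a_j][v,a_k]$ terms. What actually works is to pair terms \emph{across} the two groups: two of the $(3,2)$-terms combine via (\ref{c32-1}), two of the $(2,3)$-terms combine via (\ref{c32-1}) after commuting the factors, and the remaining $(3,2)$-term pairs with the remaining $(2,3)$-term via (\ref{c32-2}). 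Your parallel-induction framework supports this correction, but the pairing you described does not go through.
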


A generating set of the ideal $T^{(4)}$ can be rewritten in the following more convenient way:

\begin{corollary}\label{corollarytomaintheorem3}
The ideal $T^{(4)}$ is generated (as a two-sided ideal in $K \langle Y \rangle$) by the polynomials (\ref{c4}), (\ref{c33}) together with the polynomials
\begin{gather}
[y_{1}, y_{2}] [y_{3}, y_{4}, y_{5}] - \mbox{\rm sgn} \, (\sigma ) [y_{{\sigma (1)}}, y_{{\sigma (2)}}] [y_{{\sigma (3)}}, y_{{\sigma (4)}}, y_{{\sigma (5)}}] \label{c32sigma}
\\
(y_i \in Y  , \ \sigma \in S_5), \nonumber
\\
[y_{1}, y_{2}] [y_{3}, y_{4}] [y_{5}, y_{6}] - \mbox{\rm sgn} \, (\sigma ) [y_{{\sigma (1)}}, y_{{\sigma (2)}}] [y_{{\sigma (3)}}, y_{{\sigma (4)}}] [y_{{\sigma (5)}}, y_{{\sigma (6)}}] \label{c222sigma}
\\
(y_i \in Y , \ \sigma \in S_6 ). \nonumber
\end{gather}
\end{corollary}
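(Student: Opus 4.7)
My plan is to establish the equality of two-sided ideals $\langle G \rangle = \langle G' \rangle$, where $G$ denotes the generating set given in Theorem~\ref{maintheorem3} and $G'$ the set in the corollary. The inclusion $\langle G \rangle \subseteq \langle G' \rangle$ is immediate: relations (\ref{c32-1}) and (\ref{c32-2}) are the instances of (\ref{c32sigma}) for $\sigma = (2\;5)$ and $\sigma = (2\;4)$ respectively, while (\ref{c222}) is the $\sigma = (2\;3)$ case of (\ref{c222sigma}).

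For the reverse inclusion it suffices, by Theorem~\ref{maintheorem3}, to verify that each generator of $G'$ lies in $T^{(4)}$. The nontrivial task is the sign-antisymmetry: $\sigma f \equiv \mathrm{sgn}(\sigma)\, f \pmod{T^{(4)}}$ for all $\sigma \in S_5$, where $f = [y_1, y_2][y_3, y_4, y_5]$, and $\sigma g \equiv \mathrm{sgn}(\sigma)\, g \pmod{T^{(4)}}$ for all $\sigma \in S_6$, where $g = [y_1, y_2][y_3, y_4][y_5, y_6]$. In each case the scheme is to produce a set of transpositions $s$ with $sf \equiv -f$ (respectively $sg \equiv -g$) that generates the full symmetric group, and then extend to all $\sigma$ by induction on the length of $\sigma$ as a word in these transpositions. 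The induction step uses the invariance of $T^{(4)}$ under permutation of variables: applying $s$ to the congruence $\sigma f \equiv (-1)^m f$ yields $s\sigma f \equiv (-1)^m sf \equiv (-1)^{m+1} f \pmod{T^{(4)}}$.

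For $f$ and $S_5$ the antisymmetry of the outer and inner commutators gives $(1\;2) f \equiv -f$ and $(3\;4) f \equiv -f$, while (\ref{c32-1}) and (\ref{c32-2}) give $(2\;5) f \equiv -f$ and $(2\;4) f \equiv -f$; the corresponding transposition graph is connected on $\{1, 2, 3, 4, 5\}$, so these four transpositions generate $S_5$. For $g$ and $S_6$ the analogous transpositions $(1\;2), (3\;4), (5\;6), (2\;3)$ generate only $S_4 \times S_2$, leaving $\{1, 2, 3, 4\}$ and $\{5, 6\}$ disconnected; bridging these two blocks is the main obstacle. I plan to overcome it by observing that
\[
[[y_i, y_j], [y_k, y_l]] = [y_i, y_j, y_k, y_l] - [y_i, y_j, y_l, y_k] \in T^{(4)},
\]
a short Leibniz expansion. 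Consequently products of commutators $[y_a, y_b]$ commute modulo $T^{(4)}$; substituting $(y_1, y_2, y_3, y_4, y_5, y_6) \mapsto (y_3, y_4, y_5, y_6, y_1, y_2)$ in (\ref{c222}) and then moving the trailing factor $[y_1, y_2]$ to the front modulo $T^{(4)}$ turns the resulting relation into $g + (4\;5)\, g \in T^{(4)}$, giving $(4\;5) g \equiv -g$. The enlarged set $\{(1\;2), (3\;4), (5\;6), (2\;3), (4\;5)\}$ generates $S_6$ along the path $1{-}2{-}3{-}4{-}5{-}6$, which completes the induction for (\ref{c222sigma}).
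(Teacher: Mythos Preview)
Your proof is correct and follows essentially the same approach as the paper: both directions of the ideal equality are handled as you describe, and for the $S_5$ case the paper uses exactly the transpositions $(1\;2)$, $(3\;4)$, $(2\;5)$, $(2\;4)$ that you do. For the $S_6$ case the paper simply says ``one can prove in a similar way'' without further detail; your observation that $(1\;2),(3\;4),(5\;6),(2\;3)$ alone generate only $S_4\times S_2$, and your use of the commutativity of commutator products modulo $T^{(4)}$ to obtain the additional transposition $(4\;5)$, makes explicit a point the paper glosses over.
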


Note that $3 \, [y_1,y_2][y_3,y_4,y_5] \in T^{(4)}$ for all $y_i \in Y$; one can deduce this from the results of \cite{EKM09, Gordienko07, Volichenko78}, see also \cite[Corollary 2.4]{Kras12}. Hence, if $\frac{1}{3} \in K$ then all polynomials
\begin{equation}
\label{23}
[y_1,y_2][y_3,y_4,y_5] \qquad (y_i \in Y )
\end{equation}
belong to $T^{(4)}$. Since the polynomials (\ref{c33})--(\ref{c32-2}) belong to the ideal generated by the polynomials (\ref{23}), Theorem \ref{maintheorem3} implies the following corollary that has been proved in \cite{EKM09, Volichenko78}.

\begin{corollary}[see \cite{EKM09, Volichenko78}]
If $\frac{1}{3} \in K$ then $T^{(4)}$ is generated as a two-sided ideal of $K \langle Y \rangle$ by the polynomials (\ref{c4}), (\ref{c222}) and (\ref{23}).
\end{corollary}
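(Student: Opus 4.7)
My plan is to derive the corollary directly from Theorem \ref{maintheorem3}. First, as noted in the paragraph preceding the corollary, the results of \cite{EKM09, Gordienko07, Volichenko78} give that $3\,[y_1,y_2][y_3,y_4,y_5] \in T^{(4)}$ for all $y_i \in Y$. Since $\tfrac{1}{3} \in K$, it follows that every polynomial of type (\ref{23}) lies in $T^{(4)}$. Combined with the obvious containment of (\ref{c4}) and (\ref{c222}) in $T^{(4)}$, this gives the easy inclusion: the two-sided ideal $J$ generated by (\ref{c4}), (\ref{c222}) and (\ref{23}) is contained in $T^{(4)}$.

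For the reverse inclusion $T^{(4)} \subseteq J$, Theorem \ref{maintheorem3} reduces the problem to showing that the three extra generators (\ref{c33}), (\ref{c32-1}), (\ref{c32-2}) already lie in $J$. The polynomials (\ref{c32-1}) and (\ref{c32-2}) are handled at once, since each is literally a sum of two polynomials of type (\ref{23}) (after relabeling the free generators). For (\ref{c33}), a short manipulation using only the definition of the commutator suffices: expanding $[y_1,y_2,y_3]=[y_1,y_2]\,y_3-y_3\,[y_1,y_2]$ and then commuting $y_3$ past $[y_4,y_5,y_6]$ via $y_3[y_4,y_5,y_6]=[y_4,y_5,y_6]\,y_3-[y_4,y_5,y_6,y_3]$ yields
\[
[y_1,y_2,y_3][y_4,y_5,y_6] = [y_1,y_2][y_4,y_5,y_6]\,y_3 \;-\; [y_1,y_2][y_4,y_5,y_6,y_3] \;-\; y_3\,[y_1,y_2][y_4,y_5,y_6].
\]
The first and third summands on the right are left-right multiples of a polynomial of type (\ref{23}); the middle summand is a left multiple of a polynomial of type (\ref{c4}). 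Hence (\ref{c33}) lies in $J$, completing the proof.

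No serious obstacle is anticipated. The only role of the hypothesis $\tfrac{1}{3}\in K$ is to promote $3\,(\ref{23}) \in T^{(4)}$ to $(\ref{23}) \in T^{(4)}$; once this is available, the three auxiliary generators (\ref{c33}), (\ref{c32-1}), (\ref{c32-2}) of Theorem \ref{maintheorem3} are seen to be absorbed into the ideal generated by (\ref{c4}) and (\ref{23}) by elementary commutator arithmetic, with (\ref{c222}) carried along unchanged.
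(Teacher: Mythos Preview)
Your proof is correct and follows essentially the same approach as the paper: the paper simply asserts in the paragraph preceding the corollary that the polynomials (\ref{c33})--(\ref{c32-2}) lie in the ideal generated by the polynomials (\ref{23}), and then invokes Theorem \ref{maintheorem3}. You carry out exactly this reduction, additionally supplying the explicit commutator identity that places (\ref{c33}) inside the ideal generated by (\ref{c4}) and (\ref{23}), which the paper leaves to the reader.
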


\textbf{Remarks.} The study of Lie nilpotent associative rings and algebras was started by Jennings \cite{Jennings47} in 1947. Jennings' work was motivated by the study of modular group algebras of finite $p$-groups. Since then Lie nilpotent associative rings and algebras were investigated in various papers from various points of view; see, for instance, \cite{AS01, Gordienko07, GTS10, GL83, Kras97, Latyshev65, Petrogradsky11, RT99, Volichenko78} and the bibliography there.

Recent interest in Lie nilpotent associative algebras was motivated by the study of the quotients $L_{i}/L_{i+1}$ of the lower central series
\[
L_1 > L_2 > \dots > L_n > \dots
\]
of the associated Lie algebra of a free associative algebra $A$; here $L_n$ is the linear span in $A$ of the set of all commutators $[a_1, a_2, \dots , a_n]$ $(a_i \in A)$. The study of these quotients $L_i /L_{i+1}$ was initiated in 2007 in a pioneering article of Feigin and Shoikhet \cite{FS07}; further results on this subject can be found, for example, in \cite{AJ10,BB11,BJ10,DE08,DKM08,EKM09,JO13,Kerchev13}. Since $T^{(n)}$ is the ideal in $A$ generated by $L_n$, some results about the quotients $T^{(i)}/T^{(i+1)}$ were obtained in these articles as well; in \cite{EKM09,Kerchev13} the latter quotients were the primary objects of study.

In 2012 Bhupatiraju, Etingof, Jordan, Kuszmaul and Li \cite{BEJKL12} started the study of the quotients $L_i /L_{i+1}$ for free associative rings $A$; that is, for free associative algebras $A$ over $\mathbb Z$. In this case the quotients in question (may) develop torsion and one of the objects of study is the pattern of this torsion.  In \cite{BEJKL12} many results concerning torsion in $L_i/L_{i+1}$ were obtained and various open problems concerning this torsion were posed. Various results about the quotients $T^{(i)}/T^{(i+1)}$ and $A / T^{(i+1)}$ were obtained also in \cite{BEJKL12}; in particular, as mentioned above, it was proved that the additive group of $\mathbb Z \langle X \rangle /T^{(3)}$ is free abelian and a basis of this group was exhibited \cite[Prop. 3.2]{BEJKL12}. Some further results concerning the quotients $T^{(i)} / T^{(i+1)}$ for various associative rings $A$ were obtained by Cordwell, Fei and Zhou in \cite{CFZ13}.

It was observed in \cite{BEJKL12} that in the quotients $T^{(i)} / T^{(i+1)}$ there is no torsion in the (additive) subgroups generated by the polynomials of small degree. However, in \cite{Kras12} the second author of the present article has proved that the image of $v_1 = [x_1,x_2] [x_3,x_4,x_5]$ in $\mathbb Z \langle X \rangle /T^{(4)}$ is an element of order $3$, that the torsion subgroup of $\mathbb Z \langle X \rangle /T^{(4)}$ coincides with $T^{(3,2)} / T^{(4)}$ and that the latter is an elementary abelian $3$-group. To find a basis of this group is the aim of the present article.

Interesting computational data about the torsion subgroup of $T^{(i)} / T^{(i+1)}$ for various $i$ was presented in \cite{CFZ13}. In particular, this data suggests that the additive group of $\mathbb Z \langle X \rangle /T^{(5)}$ may have no torsion. Whether this group is torsion-free indeed is still an open problem. In \cite{CK13} Costa and the second author of the present article have found generators for the ideal $T^{(5)}$ of the free associative algebra $K \langle X \rangle$. This result is similar to Theorem \ref{maintheorem3}, which gives such generators for the ideal $T^{(4)}$. The result of \cite{CK13} might be the first step in proving that the additive  group of $\mathbb Z \langle X \rangle /T^{(5)}$ is torsion-free.

\section{Preliminaries}

Let $K$ be an arbitrary associative and commutative unital ring. An ideal $T$ of the free $K$-algebra $K \langle X \rangle$ is called \textit{$T$-ideal} if $\phi (T) \subseteq T$ for all endomorphisms $\phi$ of $K \langle X \rangle$. One can easily see that $T^{(4)}$ and $T^{(3,2)}$ are $T$-ideals. We refer to \cite{Drenskybook, GZbook, Rowenbook} for terminology, basic facts and references concerning $T$-ideals and polynomial identities in associative algebras.

Let $R^{(i_1, i_2, \dots )}$ denote the $K$-linear span in $K \langle X \rangle$ of all monomials of multi-degree $(i_1, i_2, \dots )$, that is, of the monomials of degree $i_1$ in $x_1$, $i_2$ in $x_2$ etc. Let $f \in K \langle X \rangle$ be a polynomial, $f = f^{(i_{11}, i_{12} \dots )} + \dots + f^{(i_{s1}, i_{s2} \dots )}$ where $f^{(i_{t1}, i_{t2}, \dots )} \in R^{(i_{t1}, i_{t2}, \dots )}$ for all $t$. We say that the polynomials $f^{(i_{11}, i_{12} \dots )},$  $\dots ,$ $f^{(i_{s1}, i_{s2} \dots )}$ are \textit{multihomogeneous components} of the polynomial $f$. For instance, if $f = x_1^3 + 2 x_1x_2x_1 - x_1^2 x_2$ then $x_1^3$ and $2 x_1x_2x_1 - x_1^2 x_2$ are the multihomogeneous components of $f$ of multidegrees $(3, 0, \dots)$ and $(2,1,0, \dots)$, respectively.

We say that an ideal $I$ of $K \langle X \rangle$ is \textit{multihomogeneous} if, for each $f \in I$, all multihomogeneous components of $f$ also belong to $I$. The ideal $T^{(4)}$ is multihomogeneous because it is spanned by multihomogeneous polynomials $a_0[a_1,a_2,a_3,a_4]a_5$ where all $a_i$ are monomials. Similarly, the ideal $T^{(3,2)}$ is also multihomogeneous.

Let $\Gamma$ be the unital subalgebra of the free $K$-algebra $K \langle X \rangle$ generated by all (left-normed) commutators $[x_{i_1}, x_{i_2}, \dots , x_{i_l}]$ where $l \ge 2$ and $x_i \in X$ for all $i$. The following assertion is well-known for an arbitrary $T$-ideal $T$ in $K \langle X \rangle$ if $K$ is an infinite field (see, for instance, \cite[Proposition 4.3.3]{Drenskybook}). However, its proof remains valid over an arbitrary associative and commutative unital ring $K$ if the $T$-ideal $T$ is \textit{ multihomogenous}.

\begin{proposition}[see \cite{Drenskybook}]
Let $K$ be an associative and commutative unital ring and let $T$ be a multihomogeneous $T$-ideal in the free unital associative algebra $K \langle X \rangle$. Then $T$ is generated by the set $T \cap \Gamma$ as a \textbf{two-sided} ideal of $K \langle X \rangle$.
\end{proposition}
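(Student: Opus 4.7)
Let $I$ denote the two-sided ideal of $K\langle X\rangle$ generated by $T\cap\Gamma$. Clearly $I\subseteq T$, and $I$ is multihomogeneous because it is generated by elements of the multihomogeneous set $T\cap\Gamma$. Since $T$ is also multihomogeneous by assumption, it suffices to show that every multihomogeneous $f\in T$ lies in $I$. Fix such an $f$, of multidegree $(d_1,\dots,d_n)$.

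The structural tool is a PBW-style normal form. Since $K\langle X\rangle\cong U(L(X))$, where $L(X)$ is the free Lie $K$-algebra on $X$, and since $L(X)$ admits an ordered Hall-type $K$-basis in which the single letters $x_i$ precede all commutators of length $\ge 2$, the PBW theorem supplies a unique expression
\[
f=\sum_\mu m_\mu\, g_\mu ,
\]
where $m_\mu=x_{i_1}^{a_1}\cdots x_{i_k}^{a_k}$ is a sorted commutative monomial ($i_1<\cdots<i_k$) and $g_\mu\in\Gamma$. By uniqueness, for multihomogeneous $f$ each summand $m_\mu g_\mu$ may be taken multihomogeneous of the same multidegree. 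Since $I$ is two-sided and each $m_\mu$ is a product of generators, if one can show $g_\mu\in T$ for every $\mu$, then $g_\mu\in T\cap\Gamma$, so $m_\mu g_\mu\in I$, and hence $f\in I$.

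To extract the $g_\mu$'s I would argue by induction on the total degree, peeling off summands one at a time by using that $T$ is closed under endomorphisms. The easiest summand is the one with $g_\mu=1$: apply the endomorphism $\phi\colon x_i\mapsto 1$ for all $i$. Every commutator of length $\ge 2$ vanishes under $\phi$, so $\phi$ kills every summand with $g_\mu\ne 1$ and returns precisely the scalar coefficient $c\in K$ of $x_1^{d_1}\cdots x_n^{d_n}$. Hence $c=\phi(f)\in T\cap K\subseteq T\cap\Gamma$, and $c\cdot x_1^{d_1}\cdots x_n^{d_n}\in I$. Subtracting reduces to the case in which only $g_\mu$ of length $\ge 2$ appear. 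To isolate the remaining summands, use polarization-type endomorphisms $x_i\mapsto x_i+y_i$ (with $y_i$ further free variables) and then extract multihomogeneous components in the enlarged variable set; these operations act triangularly on the PBW decomposition, allowing one to detach commutators one at a time while staying inside $T$ by virtue of its multihomogeneity.

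The principal obstacle is making this iterative extraction rigorous in full generality: one must choose the sequence of endomorphisms and components so that, at each step, a single summand $m_\mu g_\mu$ is stripped off and the remainder stays in $T$ with strictly smaller complexity. Multihomogeneity of $T$ is essential here, since over an arbitrary commutative ring $K$ the infinite-field linearization trick is unavailable; it is precisely the closure of $T$ under passage to multihomogeneous components that keeps each intermediate remainder inside $T$ throughout the induction.
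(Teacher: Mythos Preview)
Your overall strategy is the right one, and it is also the standard argument the paper defers to (the paper itself gives no proof, only the remark that Drensky's proof carries over verbatim once $T$ is assumed multihomogeneous). The PBW decomposition $f=\sum_\alpha m_\alpha g_\alpha$ with $m_\alpha=x_1^{a_1}\cdots x_n^{a_n}$ sorted and $g_\alpha\in\Gamma$ is exactly what one uses, and the goal is indeed to show each $g_\alpha\in T$. Your first step, sending every $x_i\mapsto 1$ to pick off the scalar piece, is correct.

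The gap is in your inductive step. The polarization $x_i\mapsto x_i+y_i$ you propose does \emph{not} act triangularly on the PBW decomposition, because the commutator factors are not invariant: already $[x_i,x_j]\mapsto [x_i,x_j]+[x_i,y_j]+[y_i,x_j]+[y_i,y_j]$, so the $g_\alpha$'s get scrambled among themselves and with the monomial parts. The claimed triangularity therefore fails for this particular endomorphism, and your sketch does not recover.

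The repair is to stay with \emph{scalar} shifts, exactly as in your first step. Apply the single endomorphism $\psi\colon x_i\mapsto x_i+1$ for all $i$ simultaneously. Then every $g_\alpha$ is fixed (commutators are translation-invariant), while $m_\alpha\mapsto\sum_{\gamma\le\alpha}\binom{\alpha}{\gamma}m_\gamma$. Taking the multihomogeneous component of $\psi(f)$ of multidegree $d-\delta$ (this is where multihomogeneity of $T$ is used) yields
\[
h_\delta \;=\; g_\delta \;+\; \sum_{\alpha>\delta}\binom{\alpha}{\alpha-\delta}\,m_{\alpha-\delta}\,g_\alpha \;\in\; T,
\]
and downward induction on $|\delta|$, starting from $h_d=g_d$, gives $g_\delta\in T$ for every $\delta$. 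This is the precise triangular mechanism you were reaching for; no new variables are needed, and no factorials appear, so the argument works over any commutative unital $K$.
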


Note that if $I$ is a two-sided ideal of $K \langle X \rangle$ then, for all $i$,
\[
[(I \cap \Gamma) ,x_i] \subseteq (I \cap \Gamma).
\]
It follows that if the set $I \cap \Gamma$ generates $I$ as a \textit{two-sided} ideal of $K \langle X \rangle$ then $I \cap \Gamma$ generates $I$ as a \textit{left} ideal as well. Thus, we have
\begin{corollary}\label{TcapGammaleft}
Let $K$ be an associative and commutative unital ring and let $T$ be a multihomogeneous $T$-ideal in the free unital associative algebra $K \langle X \rangle$. Then $T$ is generated by the set $T \cap \Gamma$ as a \textbf{left} ideal of $K \langle X \rangle$.
\end{corollary}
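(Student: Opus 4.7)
The plan is to promote the two-sided generation given by the preceding Proposition to a left generation, by exploiting the closure property $[T \cap \Gamma,\, x_i] \subseteq T \cap \Gamma$ that is announced in the paragraph between the Proposition and the Corollary. Once this property is in place, one can push the ``right tail'' of any product $u g v$ (with $g \in T \cap \Gamma$ and $u, v$ monomials) across $g$, at the price of replacing $g$ by either itself or a commutator $[g, x_i]$, which still lies in $T \cap \Gamma$.

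First I would justify the closure property. Containment in $T$ is automatic since $T$ is a two-sided ideal. For containment in $\Gamma$, it suffices to check that $\Gamma$ itself is stable under $[-, x_i]$: on a commutator generator one has $[[x_{i_1}, \dots, x_{i_l}], x_i] = [x_{i_1}, \dots, x_{i_l}, x_i] \in \Gamma$, and on a product $ab$ of elements of $\Gamma$ the Leibniz rule $[ab, x_i] = a[b, x_i] + [a, x_i] b$ together with the fact that $\Gamma$ is a subalgebra provides the inductive step on the number of commutator factors.

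Given any $f \in T$, the preceding Proposition writes $f$ as a $K$-linear combination of products $u g v$ with $u, v$ monomials in $X$ and $g \in T \cap \Gamma$. I would then prove, by induction on the length of $v$, that each such product lies in the left ideal $K\langle X \rangle \cdot (T \cap \Gamma)$. The base case $v = 1$ is immediate. For the inductive step, writing $v = x_i v'$ and using $g x_i = x_i g + [g, x_i]$, one obtains
\[
u g v \;=\; (u x_i)\, g\, v' \;+\; u\, [g, x_i]\, v' .
\]
By the closure property above, $[g, x_i] \in T \cap \Gamma$, so both summands have the same form with a strictly shorter tail $v'$, and the induction closes.

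There is no substantial obstacle: the conceptual work is already done in the preceding Proposition, and the only genuine verification needed is the closure property $[\Gamma, x_i] \subseteq \Gamma$, which is a routine consequence of the Leibniz rule and the definition of $\Gamma$.
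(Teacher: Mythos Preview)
Your proposal is correct and follows exactly the approach sketched in the paragraph preceding the Corollary: the paper simply asserts the closure property $[(T \cap \Gamma), x_i] \subseteq T \cap \Gamma$ and states that the passage from two-sided to left generation ``follows'', whereas you spell out both the Leibniz-rule verification of the closure and the induction on the length of the right tail $v$ that makes this implication explicit. There is no divergence in strategy, only in the level of detail.
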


The following lemma is well-known.
\begin{lemma}
\label{directsum}
The additive group of the ring $\mathbb Z \langle X \rangle$ is a direct sum of the subgroups $x_{i_1} x_{i_2} \dots x_{i_s} \Gamma$ where $s \ge 0,$ $ i_1 \le i_2 \le \dots \le i_s$,
\[
\mathbb Z \langle X \rangle = \bigoplus_{s \ge 0; \ i_1 \le \dots \le i_s} x_{i_1} x_{i_2} \dots x_{i_s} \Gamma .
\]
\end{lemma}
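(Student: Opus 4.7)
My plan is to reduce the statement to the Poincar\'e--Birkhoff--Witt theorem for the free Lie ring on $X$. Let $L$ be the Lie subring of $\mathbb Z \langle X \rangle$ generated by $X$; it is the free Lie ring on $X$, and $\mathbb Z \langle X \rangle$ coincides with its universal enveloping algebra $U(L)$. Since $L$ is classically known to be a free $\mathbb Z$-module (for instance, it admits a Hall basis), PBW applies over $\mathbb Z$: once one fixes a total order on a $\mathbb Z$-basis $\mathcal B$ of $L$, the weakly increasing monomials $b_1 b_2 \cdots b_n$ with $b_i \in \mathcal B$ and $b_1 \le \dots \le b_n$ form a $\mathbb Z$-basis of $\mathbb Z \langle X \rangle$.

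The first step is to identify $\Gamma$ with $U(L')$, where $L' := [L,L]$ is the derived subring. Using the Jacobi identity, every Lie bracket of length $\ge 2$ in the $x_i$ can be rewritten as a $\mathbb Z$-linear combination of left-normed brackets, so $L'$ equals the $\mathbb Z$-span of the brackets $[x_{i_1},\dots,x_{i_l}]$ with $l \ge 2$. Consequently, the unital associative subalgebra $\Gamma$ generated by these left-normed brackets coincides with the unital subalgebra generated by $L'$, and this subalgebra is the image of the natural map $U(L') \to U(L)$, which is injective by PBW applied to $L'$ (note that $L'$, being spanned by the Hall basis elements of $L$ of weight $\ge 2$, is itself a free $\mathbb Z$-module). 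So $\Gamma = U(L')$, and it has a $\mathbb Z$-basis of ordered monomials in $\mathcal B' := \mathcal B \cap L'$.

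Next, choose the total order on $\mathcal B$ so that $x_1 < x_2 < \dots$ precede every element of $\mathcal B'$. Each PBW basis element of $\mathbb Z \langle X \rangle$ then factors uniquely as $x_{i_1}\cdots x_{i_s}$ (with $i_1 \le \dots \le i_s$) followed by an ordered monomial in $\mathcal B'$, and the second factor ranges over a $\mathbb Z$-basis of $\Gamma$. Grouping the PBW basis of $\mathbb Z \langle X \rangle$ according to its $x$-prefix then yields
\[
\mathbb Z \langle X \rangle \;=\; \bigoplus_{s \ge 0; \ i_1 \le \dots \le i_s} x_{i_1} x_{i_2} \dots x_{i_s} \, \Gamma,
\]
which is exactly the decomposition claimed.

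The main thing to verify is PBW over $\mathbb Z$, but this is standard (see, e.g., Bourbaki, \emph{Groupes et alg\`ebres de Lie}, Ch.~II) once one observes that $L$ and $L'$ are $\mathbb Z$-free; everything else---the identification $\Gamma = U(L')$ via Jacobi, and the compatibility of the chosen PBW order with the factorization of a monomial into an $x$-prefix and a $\mathcal B'$-tail---is routine.
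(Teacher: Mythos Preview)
Your argument is correct and is in fact the standard PBW route to this well-known decomposition. The paper does not supply a proof at all: it simply records the lemma as well-known and moves on, so there is no competing approach to compare against. One small wording nit: the injectivity of $U(L') \to U(L)$ does not follow from ``PBW applied to $L'$'' alone but from PBW applied to $L$ with a basis $\mathcal B = X \cup \mathcal B'$ extending your basis $\mathcal B'$ of $L'$; you clearly have this in mind, but you may want to phrase it that way.
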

This equality can be rewritten as follows:
\begin{equation}\label{directsum2}
\mathbb Z \langle X \rangle= \Gamma \oplus \bigoplus_{s \ge 1; \ i_1 \le \dots \le i_s} x_{i_1} x_{i_2} \dots x_{i_s} \Gamma .
\end{equation}
\begin{lemma}\label{IcapGamma}
Let $I$ be a left ideal of $\mathbb Z \langle X \rangle$ generated (as a left ideal in $\mathbb Z \langle X \rangle$) by a set $S \subset \Gamma$. Then
\[
I = \Gamma \cdot S \oplus \bigoplus_{s \ge 1; \ i_1 \le \dots \le i_s} x_{i_1} x_{i_2} \dots x_{i_s} \Gamma \cdot S
\]
where $\Gamma \cdot S$ is the left ideal of the ring $\Gamma$ generated by $S$. In particular, $I \cap \Gamma = \Gamma \cdot S$, that is, $I \cap \Gamma$ is the left ideal of $\Gamma$ generated by $S$.
\end{lemma}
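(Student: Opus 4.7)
The plan is to derive the decomposition of $I$ directly from Lemma \ref{directsum} by multiplying the direct sum \eqref{directsum2} on the right by elements of $S$. Since $\mathbb Z \langle X \rangle$ is unital and $I$ is the left ideal generated by $S$, I first record that $I = \mathbb Z \langle X \rangle \cdot S$, meaning $I$ is the $\mathbb Z$-linear span of all elements $r s$ with $r \in \mathbb Z \langle X \rangle$ and $s \in S$.

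Next, I would take the decomposition \eqref{directsum2} of $\mathbb Z \langle X \rangle$, apply it to the left factor $r$, and multiply on the right by $s \in S$. Because $\Gamma$ is a subring of $\mathbb Z \langle X \rangle$ containing $S$, we have $\Gamma \cdot S \subseteq \Gamma$, and for each admissible tuple $(i_1, \dots , i_s)$, $\ x_{i_1} \dots x_{i_s} \Gamma \cdot S \subseteq x_{i_1} \dots x_{i_s} \Gamma$. This gives
\[
I = \Gamma \cdot S + \sum_{s \ge 1; \ i_1 \le \dots \le i_s} x_{i_1} x_{i_2} \dots x_{i_s} \Gamma \cdot S ,
\]
and each summand on the right is contained in the corresponding summand of \eqref{directsum2}.

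Now I would invoke the fact that the summands $\Gamma$ and $x_{i_1} \dots x_{i_s} \Gamma$ are in direct sum in $\mathbb Z \langle X \rangle$ (Lemma \ref{directsum}). Since any collection of submodules sitting inside members of a direct sum is itself a direct sum, the sum above is actually direct, yielding the desired decomposition of $I$.

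Finally, for the statement that $I \cap \Gamma = \Gamma \cdot S$, I would argue by uniqueness of the decomposition: an element of $I$ lying in $\Gamma$ must, by the direct sum in Lemma \ref{directsum}, have trivial components in every $x_{i_1} \dots x_{i_s} \Gamma$ with $s \ge 1$, so it coincides with its $\Gamma \cdot S$-component and hence belongs to $\Gamma \cdot S$. The containment $\Gamma \cdot S \subseteq I \cap \Gamma$ is obvious. No step presents a real obstacle here; the argument is a bookkeeping exercise exploiting the already-stated direct sum, and the only thing to verify carefully is that multiplying the summands of \eqref{directsum2} on the right by $S$ keeps them inside the corresponding original summands.
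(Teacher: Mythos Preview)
Your proposal is correct and follows essentially the same approach as the paper: multiply the decomposition \eqref{directsum2} on the right by $S$, observe that $S \subset \Gamma$ forces each resulting summand to sit inside the corresponding original summand, and conclude directness from Lemma \ref{directsum}. The paper's version is just a terser rendering of the same bookkeeping.
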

\begin{proof}
We have
\[
\mathbb Z \langle X \rangle \cdot S = \Gamma \cdot S + \sum_{s \ge 1; \ i_1 \le \dots \le i_s} x_{i_1} x_{i_2} \dots x_{i_s} \Gamma \cdot S.
\]
Since $S \subset \Gamma$, we have
\[
x_{i_1} x_{i_2} \dots x_{i_s} \Gamma \cdot S \subset x_{i_1} x_{i_2} \dots x_{i_s} \Gamma
\]
for all $s \ge 0; \ i_1 \le \dots \le i_s$. The result follows from (\ref{directsum2}).
\end{proof}
Lemma \ref{IcapGamma} immediately implies the following:
\begin{corollary}\label{GammaIcapGamma}
Under the hypothesis of Lemma \ref{IcapGamma}, the (additive) group $\mathbb Z \langle X \rangle / I$ is isomorphic to the direct sum of the groups
\[
 (x_{i_1} x_{i_2} \dots x_{i_s} \Gamma) / (x_{i_1} x_{i_2} \dots x_{i_s} \Gamma \cdot S) \qquad (s \ge 0; \ i_1 \le \dots \le i_s),
\]
that is,
\[
\mathbb Z \langle X \rangle / I \simeq \Gamma / ( \Gamma \cdot S)  \oplus \bigoplus_{s \ge 1; \ i_1 \le \dots \le i_s} ( x_{i_1} x_{i_2} \dots x_{i_s} \Gamma) / ( x_{i_1} x_{i_2} \dots x_{i_s} \Gamma \cdot S).
\]
For all $s \ge 1$, $i_1 \le \dots \le i_s$, the group $( x_{i_1} x_{i_2} \dots x_{i_s} \Gamma) / ( x_{i_1} x_{i_2} \dots x_{i_s} \Gamma \cdot S)$ is isomorphic to $\Gamma / ( \Gamma \cdot S)$ with an isomorphism induced by the mapping $\Gamma \rightarrow x_{i_1} x_{i_2} \dots x_{i_s} \Gamma$ that maps $f \in \Gamma$ to $ x_{i_1} x_{i_2} \dots x_{i_s} f$.
\end{corollary}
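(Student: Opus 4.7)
The plan is to derive both assertions directly from Lemma \ref{IcapGamma} together with the decomposition (\ref{directsum2}). First, I would note that these two direct-sum decompositions are compatible in the sense that each summand $\Gamma \cdot S$ (respectively, $x_{i_1} x_{i_2} \dots x_{i_s} \Gamma \cdot S$) of $I$ sits inside the corresponding summand $\Gamma$ (respectively, $x_{i_1} x_{i_2} \dots x_{i_s} \Gamma$) of $\mathbb Z \langle X \rangle$. By the standard componentwise-quotient principle for internal direct sums (if $M = \bigoplus_\alpha M_\alpha$ and $N = \bigoplus_\alpha N_\alpha$ with $N_\alpha \subseteq M_\alpha$, then $M/N \simeq \bigoplus_\alpha M_\alpha/N_\alpha$), the quotient $\mathbb Z \langle X \rangle / I$ splits as the direct sum of the quotients of the corresponding summands, which is exactly the first assertion of the corollary.

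For the second assertion, I would fix $s \ge 1$ and a non-decreasing sequence $i_1 \le \dots \le i_s$ and consider the additive map
\[
\phi \colon \Gamma \longrightarrow x_{i_1} x_{i_2} \dots x_{i_s} \Gamma, \qquad f \longmapsto x_{i_1} x_{i_2} \dots x_{i_s} f.
\]
This map is surjective by the very definition of its codomain. Its injectivity follows from the fact that $\mathbb Z \langle X \rangle$ is a free associative $\mathbb Z$-algebra and hence a domain, so left multiplication by the nonzero monomial $x_{i_1} x_{i_2} \dots x_{i_s}$ is injective on all of $\mathbb Z \langle X \rangle$, in particular on the subring $\Gamma$. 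Moreover, by definition $\phi$ carries $\Gamma \cdot S$ onto $x_{i_1} x_{i_2} \dots x_{i_s} \Gamma \cdot S$, so it descends to an isomorphism
\[
\Gamma/(\Gamma \cdot S) \xrightarrow{\ \sim\ } (x_{i_1} x_{i_2} \dots x_{i_s} \Gamma)/(x_{i_1} x_{i_2} \dots x_{i_s} \Gamma \cdot S),
\]
as claimed.

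There is essentially no serious obstacle here: the first assertion is a bookkeeping exercise combining Lemma \ref{IcapGamma} with (\ref{directsum2}), and the only nontrivial ingredient in the second is the injectivity of left multiplication by a monomial, which is the standard fact that the free associative ring $\mathbb Z \langle X \rangle$ has no zero divisors.
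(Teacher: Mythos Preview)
Your proposal is correct and is precisely the argument the paper has in mind: the corollary is stated as an immediate consequence of Lemma \ref{IcapGamma} (together with (\ref{directsum2})), and your write-up just spells out the componentwise quotient and the left-multiplication isomorphism that make this ``immediate.'' There is no difference in approach.
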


Let $P_n$ $(n \ge 1)$ be the subgroup of the additive group of $\mathbb Z \langle X \rangle$ generated by all monomials which are of degree $1$ in each variable $x_1, \dots , x_n$ and do not contain any other variable. Then $P_n$ is a free abelian group of rank $n!$.

The following well-known theorem (see, for example, \cite[Theorem 4.3.9]{Drenskybook}) describes a certain basis $S$ (called Specht basis) of the free abelian group $P_n \cap \Gamma$. Note that in \cite{Drenskybook} this theorem is stated for the vector subspace $P_n \cap \Gamma$ of the free algebra $K \langle X \rangle$ over a field $K$. However, it remains valid for the additive group $P_n \cap \Gamma$ of the free ring $\mathbb Z \langle X \rangle$. Indeed, on one hand, it is easy to see that the elements of the set $S$ generate the additive group $P_n \cap \Gamma$. On the other hand, the set $S$ is linearly independent over $\mathbb Q$ in $\mathbb Q \langle X \rangle$, therefore, it is linearly independent over $\mathbb Z$ in $\mathbb Z \langle X \rangle \subset \mathbb Q \langle X \rangle$.

\begin{theorem}[see \cite{Drenskybook}]\label{Spechtbasis}
A basis $S$ of the free abelian group $P_n \cap \Gamma$ $(n \ge 2)$ consists of all products $c_1 c_2 \dots c_m$ $(m \ge 1)$ of commutators $c_i$ $(1 \le i \le m)$ such that
\begin{itemize}
\item[(i)] Each product $c_1 c_2 \dots c_m$ is multilinear in the variables $x_1, \dots , x_n$;
\item[(ii)] Each factor $c_i = [x_{p_1}, x_{p_2}, \dots , x_{p_s}]$ is a left-normed commutator of length $\ge 2$ and the maximal index is in the first position, that is, $p_1 > p_j$ for all $j$, $2 \le j \le s$;
\item[(iii)] In each product the shorter commutators precede the longer, that is, the length of $c_k$ is smaller than or equal to the length of $c_{k+1}$ $(1 \le k \le m-1)$;
%
\item[(iv)] If two consecutive factors are commutators of equal length then the first variable of the first commutator is smaller than the first variable in the second one, that is, if
\[
c_k = [x_{p_1}, x_{p_2}, \dots , x_{p_s}], \qquad c_{k+1} = [x_{q_1}, x_{q_2}, \dots , x_{q_s}]
\]
then $p_1 < q_1$.
\end{itemize}
\end{theorem}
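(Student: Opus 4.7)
The plan is to follow exactly the two-step strategy sketched in the paragraph preceding the theorem: prove that $S$ spans $P_n \cap \Gamma$ over $\mathbb Z$ by an explicit rewriting argument, and then inherit $\mathbb Z$-linear independence of $S$ from the classical characteristic-zero Specht theorem via the inclusion $\mathbb Z \langle X \rangle \subset \mathbb Q \langle X \rangle$.

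The spanning step splits into three layers of normalisation, each performed by rewrites with integer coefficients. First, repeated application of the Jacobi identity $[[a,b],c] = [[a,c],b] + [a,[b,c]]$ lets us express every element of $\Gamma$ as an integer combination of products of left-normed commutators in the $x_i$; taking the multilinear component in $x_1,\dots,x_n$ yields (i). Second, inside a single left-normed commutator $[x_{p_1},\dots,x_{p_s}]$, the anti-commutativity $[a,b] = -[b,a]$ applied to the innermost bracket together with Jacobi allows one to move the variable of maximal index to the first position, all while preserving the length $s$ and the multiset of variables; this gives (ii). Third, to enforce (iii) and (iv) on the product $u_1 \cdots u_m$, I would swap adjacent factors via $u_k u_{k+1} = u_{k+1} u_k + [u_k, u_{k+1}]$ and re-expand $[u_k, u_{k+1}]$ by Leibniz into a combination of products in which one factor has length $s_k + s_{k+1}$, strictly larger than either $u_k$ or $u_{k+1}$; a well-founded induction on the multiset of factor lengths then terminates.

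For linear independence, invoke the classical Specht theorem (Theorem 4.3.9 of \cite{Drenskybook}), according to which $S$ is $K$-linearly independent in $K \langle X \rangle$ for any field $K$ of characteristic zero. Taking $K = \mathbb Q$, one obtains $\mathbb Q$-linear independence of $S$ in $\mathbb Q \langle X \rangle$, and since $\mathbb Z \langle X \rangle$ embeds in $\mathbb Q \langle X \rangle$ with $S$ mapped to $S$, a non-trivial $\mathbb Z$-relation among elements of $S$ would give a non-trivial $\mathbb Q$-relation, a contradiction. Combined with spanning, this gives the theorem.

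The main obstacle is the third layer of the spanning step: choosing a complexity measure on products $u_1 \cdots u_m$ that is well-founded and strictly decreases under the combined operation of swapping two adjacent factors and then re-normalising the correction term by passing through layers one and two. The standard choice is the partition of $n$ given by the multiset of factor lengths, ordered by dominance, with a lexicographic refinement on the sequence of first variables; verifying strict decrease under every instance of the swap requires some care but is routine. Everything else reduces either to elementary identities in $\mathbb Z \langle X \rangle$ or to the cited characteristic-zero result.
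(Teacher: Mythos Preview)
Your proposal is correct and follows exactly the approach the paper takes: the paper does not prove this theorem in the body but only sketches it in the paragraph immediately preceding the statement, noting that spanning over $\mathbb Z$ is ``easy to see'' and that linear independence over $\mathbb Z$ follows from the classical characteristic-zero Specht basis result via the inclusion $\mathbb Z \langle X \rangle \subset \mathbb Q \langle X \rangle$. Your three-layer rewriting argument for spanning and your appeal to \cite[Theorem 4.3.9]{Drenskybook} for independence are precisely an expansion of that sketch, so there is nothing to add.
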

For example, the Specht basis $S$ of $P_4 \cap \Gamma$ is as follows: $S = S_1 \cup S_2$ where
\begin{align*}
S_1 = & \bigl\{ [x_4, x_{i_1}, x_{i_2}, x_{i_3}] \mid \{ i_1, i_2, i_3 \} = \{ 1,2,3 \} \bigr\} ,
\\
S_2 = & \bigl\{ [x_{i_1}, x_{i_2}] [x_4, x_{i_3}] \mid \{ i_1, i_2, i_3 \} = \{ 1,2,3 \} , \, i_1 > i_2 \bigr\} .
\end{align*}
The basis $S$ of $P_5 \cap \Gamma$ is as follows: $S = S_1 \cup S_2 \cup S_3$ where
\begin{align*}
S_1 = & \bigl\{ [x_5, x_{i_1}, x_{i_2}, x_{i_3}, x_{i_4}] \mid \{ i_1, i_2, i_3, i_4 \} = \{ 1,2,3, 4 \} \bigr\} ,
\\
S_2 = & \bigl\{ [x_5, x_{i_1}] [x_{i_2}, x_{i_3}, x_{i_4}] \mid \{ i_1, i_2, i_3, i_4 \} = \{ 1,2,3, 4 \} , i_2 > i_3, i_4 \bigr\} ,
\\
S_3 = & \bigl\{ [x_{i_1},x_{i_2}] [x_5, x_{i_3}, x_{i_4}] \mid \{ i_1, i_2, i_3, i_4 \} = \{ 1,2,3, 4 \} , i_1 > i_2 \bigr\} .
\end{align*}

\section{ Proof of Theorem 1.1 }

In this section we will prove Theorem \ref{maintheorem1} assuming that Corollary \ref{corollarytomaintheorem3} holds. If $K = \mathbb Z$ and $Y = X = \{ x_i \mid i \in \mathbb N \}$ then Corollary \ref{corollarytomaintheorem3} can be rewritten as follows:

\begin{corollary}\label{generators_T4_ideal}
The ideal $T^{(4)}$ is generated as a two-sided ideal in $\mathbb Z \langle X \rangle$ by the polynomials
\begin{gather}
\label{cc4} [x_{i_1}, x_{i_2}, x_{i_3}, x_{i_4}] \qquad (i_l \in \mathbb N ),
\\
\label{cc33} [x_{i_1}, x_{i_2}, x_{i_3}] [x_{i_4}, x_{i_5}, x_{i_6}] \qquad (i_l \in \mathbb N ),
\\
\label{cc32} [x_{i_1}, x_{i_2}] [x_{i_3}, x_{i_4}, x_{i_5}] - \mbox{\rm sgn} \, (\sigma ) [x_{i_{\sigma (1)}}, x_{i_{\sigma (2)}}] [x_{i_{\sigma (3)}}, x_{i_{\sigma (4)}}, x_{i_{\sigma (5)}}]
\\
(i_l \in \mathbb N  , \ \sigma \in S_5), \nonumber
\\
\label{cc222} [x_{i_1}, x_{i_2}] [x_{i_3}, x_{i_4}] [x_{i_{5}}, x_{i_{6}}] - \mbox{\rm sgn} \, (\sigma ) [x_{i_{\sigma (1)}}, x_{i_{\sigma (2)}}] [x_{i_{\sigma (3)}}, x_{i_{\sigma (4)}}] [x_{i_{\sigma (5)}}, x_{i_{\sigma (6)}}]
\\
(i_l \in \mathbb N , \ \sigma \in S_6 ). \nonumber
\end{gather}
\end{corollary}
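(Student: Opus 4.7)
The plan is to deduce Corollary \ref{generators_T4_ideal} directly from Corollary \ref{corollarytomaintheorem3} by specialization, since Corollary \ref{corollarytomaintheorem3} is formulated for an arbitrary associative and commutative unital ring $K$ and an arbitrary non-empty set $Y$ of free generators. First I would set $K = \mathbb Z$ and $Y = X = \{ x_i \mid i \in \mathbb N \}$, so that $K \langle Y \rangle = \mathbb Z \langle X \rangle$. I would then note that the ideal denoted $T^{(4)}$ in Corollary \ref{corollarytomaintheorem3} agrees, under this specialization, with the ideal $T^{(4)} \subset \mathbb Z \langle X \rangle$ introduced in the Introduction: both are by definition the two-sided ideal generated by all commutators $[a_1, a_2, a_3, a_4]$ with the $a_i$ ranging over the ambient ring, and this definition is stable under the identification $K \langle Y \rangle = \mathbb Z \langle X \rangle$.

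The next step is a purely notational one: relabel the generic free generators $y_j$ occurring in the polynomials (\ref{c4}), (\ref{c33}), (\ref{c32sigma}), (\ref{c222sigma}) as $x_{i_j}$ with $i_j \in \mathbb N$. Under this substitution the five families of generators listed in Corollary \ref{corollarytomaintheorem3} become, term by term, the four families (\ref{cc4}), (\ref{cc33}), (\ref{cc32}), (\ref{cc222}) stated in Corollary \ref{generators_T4_ideal} (the polynomials (\ref{c33}) map onto (\ref{cc33}), those of type (\ref{c32sigma}) onto (\ref{cc32}), and so on). Since every $a \in \mathbb Z \langle X \rangle$ is a $\mathbb Z$-linear combination of monomials in the $x_i$, these substitutions produce exactly the same two-sided ideal of $\mathbb Z \langle X \rangle$ as the polynomials of Corollary \ref{corollarytomaintheorem3} generate over $K \langle Y \rangle$. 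There is no genuine obstacle here; the only point worth flagging is that Corollary \ref{corollarytomaintheorem3} is being assumed for this section, its own proof being deferred, so the real work of the section will be to use the reformulated list (\ref{cc4})--(\ref{cc222}) to establish Theorem \ref{maintheorem1}.
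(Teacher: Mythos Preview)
Your proposal is correct and matches the paper's approach exactly: the paper simply states that Corollary~\ref{generators_T4_ideal} is Corollary~\ref{corollarytomaintheorem3} ``rewritten'' with $K=\mathbb Z$ and $Y=X=\{x_i\mid i\in\mathbb N\}$, which is precisely the specialization you describe.
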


We need the following:

\begin{lemma}\label{generators_T4_left_ideal}
The ideal $T^{(4)}$ is generated as a \textbf{left} ideal in $\mathbb Z \langle X \rangle$ by the polynomials
\begin{equation} \label{cck}
[x_{i_1}, x_{i_2}, \dots , x_{i_k}] \qquad (k \ge 4, \ i_l \in \mathbb N )
\end{equation}
together with the polynomials (\ref{cc33})--(\ref{cc222}).
\end{lemma}

\begin{proof}
Let $I_1$ be the two-sided ideal of $\mathbb Z \langle X \rangle$ generated by all polynomials (\ref{cc4}). It is clear that as a left ideal $I_1$ is generated by the polynomials (\ref{cck}).

Let $I_2$ be the two-sided ideal of $\mathbb Z \langle X \rangle$ generated by all polynomials (\ref{cc4}) and (\ref{cc33}). One can easily check that, modulo $I_1$, each polynomial of the form (\ref{cc33}) is central in $\mathbb Z \langle X \rangle$. It follows that $I_2/I_1$ is generated as a left ideal in $\mathbb Z \langle X \rangle /I_1$ by the images of the polynomials (\ref{cc33}). Hence, $I_2$ is generated as a left ideal in $\mathbb Z \langle X \rangle$ by the polynomials (\ref{cck}) and (\ref{cc33}).

Let $I_3$ be the two-sided ideal of $\mathbb Z \langle X \rangle$ generated by all polynomials (\ref{cc4})--(\ref{cc32}). Since the polynomials (\ref{cc32}) are central modulo $I_2$, $I_3/I_2$ is generated as a left ideal of $\mathbb Z \langle X \rangle /I_2$ by all polynomials (\ref{cc32}). It follows that $I_3$ as a left ideal of $\mathbb Z \langle X \rangle$ is generated by the polynomials (\ref{cck}), (\ref{cc33}), (\ref{cc32}).

Now to complete the proof of Lemma \ref{generators_T4_left_ideal}, that is, to prove that $T^{(4)}$ is generated as a left ideal of $\mathbb Z \langle X \rangle$ by the polynomials (\ref{cck}) together with the polynomials (\ref{cc33})--(\ref{cc222}), it suffices to check that the polynomials (\ref{cc222}) are central modulo $I_3$. Since each polynomial of the form (\ref{cc222}) is, modulo $I_1$, a linear combination of polynomials of the form
\begin{equation}\label{cc222-1}
\bigl( [x_{i_1}, x_{i_2}][x_{i_3}, x_{i_4}] + [x_{i_1}, x_{i_3}][x_{i_2}, x_{i_4}] \bigr) [x_{i_5}, x_{i_6}] \qquad (i_l \in \mathbb N ),
\end{equation}
it suffices to check that each polynomial of the form (\ref{cc222-1}) is central modulo $I_3$. We have
\begin{align}
& \label{2221} \bigl[ \bigl( [x_{i_1}, x_{i_2}][x_{i_3}, x_{i_4}] + [x_{i_1}, x_{i_3}][x_{i_2}, x_{i_4}] \bigr) [x_{i_5}, x_{i_6}], x_{i_7} \bigr]
\\
= & [x_{i_1}, x_{i_2}, x_{i_7}][x_{i_3}, x_{i_4}] [x_{i_5}, x_{i_6}]
+ [x_{i_1}, x_{i_2}][x_{i_3}, x_{i_4},x_{i_7}] [x_{i_5}, x_{i_6}] \nonumber
\\
+ & [x_{i_1}, x_{i_2}][x_{i_3}, x_{i_4}] [x_{i_5}, x_{i_6}, x_{i_7}]
+ [x_{i_1}, x_{i_3}, x_{i_7}][x_{i_2}, x_{i_4}] [x_{i_5}, x_{i_6}] \nonumber
\\
+ & [x_{i_1}, x_{i_3}][x_{i_2}, x_{i_4}, x_{i_7}] [x_{i_5}, x_{i_6}]
+ [x_{i_1}, x_{i_3}][x_{i_2}, x_{i_4}] [x_{i_5}, x_{i_6}, x_{i_7}]  \nonumber
\\
= & f_1 + f_2 + f_3 \nonumber
\end{align}
where
\begin{align*}
f_1 = & [x_{i_1}, x_{i_2}, x_{i_7}][x_{i_3}, x_{i_4}] [x_{i_5}, x_{i_6}] +  [x_{i_1}, x_{i_3}, x_{i_7}][x_{i_2}, x_{i_4}] [x_{i_5}, x_{i_6}],
\\
f_2 = & [x_{i_1}, x_{i_2}][x_{i_3}, x_{i_4},x_{i_7}] [x_{i_5}, x_{i_6}] + [x_{i_1}, x_{i_3}][x_{i_2}, x_{i_4}, x_{i_7}] [x_{i_5}, x_{i_6}],
\\
f_3 = & [x_{i_1}, x_{i_2}][x_{i_3}, x_{i_4}] [x_{i_5}, x_{i_6}, x_{i_7}] + [x_{i_1}, x_{i_3}][x_{i_2}, x_{i_4}] [x_{i_5}, x_{i_6}, x_{i_7}].
\end{align*}

One can easily check that $f_1, f_2 \in I_3$.  Further,
\begin{align*}
f_3 = & \bigl( [x_{i_1}, x_{i_2}][x_{i_3}, x_{i_4}] [x_{i_5}, x_{i_6}, x_{i_7}] + [x_{i_1}, x_{i_7}][x_{i_3}, x_{i_4}] [x_{i_5}, x_{i_6}, x_{i_2}] \bigr)
\\
- & \bigl( [x_{i_1}, x_{i_7}][x_{i_3}, x_{i_4}] [x_{i_5}, x_{i_6}, x_{i_2}] + [x_{i_1}, x_{i_7}][x_{i_2}, x_{i_4}] [x_{i_5}, x_{i_6}, x_{i_3}] \bigr)
\\
+ &  \bigl( [x_{i_1}, x_{i_7}][x_{i_2}, x_{i_4}] [x_{i_5}, x_{i_6}, x_{i_3}] +  [x_{i_1}, x_{i_3}][x_{i_2}, x_{i_4}] [x_{i_5}, x_{i_6}, x_{i_7}] \bigr)
\end{align*}
where each sum in parenthesis belong to $I_3$. Hence, $f_3$ belongs to $I_3$ and so does the polynomial (\ref{2221}). It follows that each polynomial (\ref{cc222-1}) is central modulo $I_3$ so $T^{(4)}$ is generated as a left ideal in $\mathbb Z \langle X \rangle$ by the polynomials (\ref{cck}) and (\ref{cc33})--(\ref{cc222}). The proof of Lemma \ref{generators_T4_left_ideal} is completed.
\end{proof}

Let $\Lambda$ be the set of all (left-normed) commutators $[x_{i_1}, \dots , x_{i_k}]$ where $k \ge 2$ and $i_l \in \mathbb N$ for all $l$. Let $\ell ([x_{i_1}, \dots , x_{i_k}])$ denote the length of the commutator $[x_{i_1}, \dots , x_{i_k}]$, $\ell ([x_{i_1}, \dots , x_{i_k}]) = k$. Recall that $\Gamma$ is the unital subring of $\mathbb Z \langle X \rangle$ generated by $\Lambda$.

Let $\Gamma^{(4)} = T^{(4)} \cap \Gamma$; note that $\Gamma^{(4)}$ is an ideal of the ring $\Gamma$. By Lemmas \ref{IcapGamma} and \ref{generators_T4_left_ideal}, we have
\begin{corollary}\label{generators_T4capGamma}
The ideal $\Gamma^{(4)}$ is generated as a \textbf{left} ideal of $\Gamma$ by the polynomials (\ref{cc33}), (\ref{cc32}), (\ref{cc222}) and (\ref{cck}).
\end{corollary}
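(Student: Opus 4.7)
The plan is to deduce this corollary directly by combining the two lemmas just proved. First I would observe that every polynomial in the generating list (\ref{cc33}), (\ref{cc32}), (\ref{cc222}), (\ref{cck}) lies in the subring $\Gamma$, since each is a polynomial expression in left-normed commutators of length at least $2$. Denote this collection of polynomials by $S$; then $S \subset \Gamma$ and, by Lemma \ref{generators_T4_left_ideal}, $S$ generates $T^{(4)}$ as a left ideal of $\mathbb{Z}\langle X \rangle$, i.e.\ $T^{(4)} = \mathbb{Z}\langle X \rangle \cdot S$.

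Next I would invoke Lemma \ref{IcapGamma} with $I = T^{(4)}$ and this choice of $S$. The lemma applies because $S \subset \Gamma$ and $S$ generates $T^{(4)}$ as a left ideal, and its conclusion is precisely
\[
T^{(4)} \cap \Gamma \;=\; \Gamma \cdot S,
\]
where $\Gamma \cdot S$ denotes the left ideal of $\Gamma$ generated by $S$. Since $\Gamma^{(4)} = T^{(4)} \cap \Gamma$ by definition, this is exactly the assertion of the corollary.

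There is essentially no obstacle here beyond bookkeeping: the real work has already been done in Lemma \ref{generators_T4_left_ideal} (reducing the two-sided generating set of Corollary \ref{generators_T4_ideal} to a left generating set by showing the relevant ``central modulo $I_3$'' computation for the polynomials (\ref{cc222})) and in Lemma \ref{IcapGamma} (using the decomposition (\ref{directsum2}) to intersect a left ideal with $\Gamma$). The only point worth flagging is to verify the hypothesis of Lemma \ref{IcapGamma}, namely that $S \subset \Gamma$; this is immediate from the definition of $\Gamma$ as the unital subring generated by $\Lambda$, since every polynomial in (\ref{cc33})--(\ref{cc222}) and (\ref{cck}) is a $\mathbb{Z}$-linear combination of products of elements of $\Lambda$.
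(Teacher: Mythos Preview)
Your proposal is correct and follows exactly the paper's approach: the paper derives this corollary simply by invoking Lemmas \ref{IcapGamma} and \ref{generators_T4_left_ideal}, and your write-up spells out precisely how those two lemmas combine (with the observation that the generating set lies in $\Gamma$).
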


Let $\mathcal P$ be the set of all products $c_1 \dots c_m$ where $ m \ge 0, \ c_1, \dots , c_m \in \Lambda, \ \ell (c_1) \le \dots \le \ell (c_m).$ We assume that if $m = 0$ then the product above is equal to $1$ so $1 \in \mathcal P$. One can easily check that the set $\mathcal P$ generates the additive group of $\Gamma$.

The following proposition can be deduced easily from Corollary \ref{generators_T4capGamma}.

\begin{proposition}
\label{generatorsaddgrp}
The additive group of the ideal $\Gamma^{(4)} $ of the ring $\Gamma$ is generated by the following polynomials:
\begin{gather}
\label{ccc4} c_1 \dots c_m \in \mathcal P \quad \mbox{where} \quad m \ge 1, \  \ell (c_m) \ge 4,
\\
\label{ccc33} c_1 \dots c_m \in \mathcal P \ \ \mbox{where} \ \  m \ge 2, \ \ell (c_{m-1}) = \ell (c_m) = 3,
\\
\label{ccc32} f \cdot ([x_{i_1}, x_{i_2}] [ x_{i_3}, x_{i_4}, x_{i_5}] - \mbox{\rm sgn} \, (\tau )
[x_{i_{\tau (1)}}, x_{i_{\tau (2)}}] [ x_{i_{\tau (3)}}, x_{i_{\tau (4)}}, x_{i_{\tau (5)}}])
\\
\mbox{where} \quad f \in \mathcal P, \ i_l \in \mathbb N , \ \tau \in S_5 , \nonumber
\\
\label{ccc222} f \cdot ([x_{i_1}, x_{i_2}] [x_{i_3}, x_{i_4}] [x_{i_{5}}, x_{i_{6}}] - \mbox{\rm sgn} \, (\tau ) [x_{i_{\tau (1)}}, x_{i_{\tau (2)}}] [x_{i_{\tau (3)}}, x_{i_{\tau (4)}}] [x_{i_{\tau (5)}}, x_{i_{\tau (6)}}])
\\
\mbox{where} \quad f \in \mathcal P, \ i_l \in \mathbb N , \ \tau \in S_6 . \nonumber
\end{gather}
\end{proposition}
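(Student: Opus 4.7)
The plan is to apply Corollary~\ref{generators_T4capGamma}, which says that $\Gamma^{(4)}$ is generated as a left ideal of $\Gamma$ by the polynomials (\ref{cck}), (\ref{cc33}), (\ref{cc32}), (\ref{cc222}). Since the set $\mathcal P$ spans the additive group of $\Gamma$, every element of $\Gamma^{(4)}$ can be written as a $\mathbb Z$-linear combination of products $f \cdot g$ with $f \in \mathcal P$ and $g$ of one of these four types. It therefore suffices to show that each such $f \cdot g$ is a $\mathbb Z$-linear combination of the polynomials (\ref{ccc4})--(\ref{ccc222}). When $g$ has the form (\ref{cc32}) or (\ref{cc222}), the product $f \cdot g$ is by definition exactly an element of type (\ref{ccc32}) or (\ref{ccc222}), so these cases require no work.

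The real content lies in the remaining two cases, where $f \cdot g = c_1 \dots c_m \cdot g$ is a product of elements of $\Lambda$ whose length sequence may fail to be non-decreasing, and so must be rewritten as a $\mathbb Z$-linear combination of elements of $\mathcal P$. The key tool will be the identity $c_i c_{i+1} = c_{i+1} c_i + [c_i, c_{i+1}]$, together with the fact that, for $c_i, c_{i+1} \in \Lambda$, the commutator $[c_i, c_{i+1}]$ expands, via iterated Jacobi identities, into a $\mathbb Z$-linear combination of elements of $\Lambda$ of length $\ell(c_i) + \ell(c_{i+1})$. Iterating this swap and recursively treating the resulting ``merge'' corrections --- by induction on the lexicographic pair (total number of factors, number of inversions) --- expresses any product of elements of $\Lambda$ as a sum of elements of $\mathcal P$. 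The crucial observation is that each merge replaces two factors of length $\ge 2$ by a single factor of length $\ge 4$, and subsequent merges only lengthen factors; hence once a length-$\ge 4$ factor appears in a term, it persists through all further sorting.

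With this tool in hand, case (\ref{cck}) is straightforward: the factor $g$ already has length $\ge 4$, so every term in the sorted sum contains a factor of length $\ge 4$ (either $g$ itself, a merger involving $g$, or a merger of two $c_i$'s), placing it in (\ref{ccc4}). For case (\ref{cc33}), the factor $g$ contributes two length-$3$ factors $d, d'$; each term in the sorted sum is either a pure permutation of the $c_i, d, d'$ with no merges --- in which case, if all $\ell(c_i) \le 3$ the sorted product ends in two length-$3$ factors and lies in (\ref{ccc33}), whereas if some $\ell(c_i) \ge 4$ it lies in (\ref{ccc4}) --- or it involves at least one merge, which creates a length-$\ge 4$ factor and places the term in (\ref{ccc4}). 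The main obstacle will be the bookkeeping required to confirm that this recursion terminates and that no term escapes (\ref{ccc4})--(\ref{ccc222}); however, once the sorting framework is carefully set up, this reduces to a routine descent on the lexicographic complexity introduced above.
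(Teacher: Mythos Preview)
Your proposal is correct and follows essentially the same approach the paper has in mind: the paper simply states that the proposition ``can be deduced easily from Corollary~\ref{generators_T4capGamma}'' and omits the details, and your argument is precisely the natural way to fill in those details. Your key observations --- that $\mathcal P$ spans $\Gamma$ additively, that the cases $g$ of type (\ref{cc32}) or (\ref{cc222}) are immediate, and that sorting a product $c_1\cdots c_m\,g$ via adjacent swaps $c_ic_{i+1}=c_{i+1}c_i+[c_i,c_{i+1}]$ preserves the presence of a length~$\ge 4$ factor (since any merge of two elements of $\Lambda$ has length $\ge 4$) --- are exactly what is needed, and your lexicographic descent on (number of factors, inversions) cleanly guarantees termination.
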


Note that the elements (\ref{ccc4})--(\ref{ccc222}) are multihomogeneous so the additive group $P_{n} \cap \Gamma^{(4)}$ is generated by all elements  (\ref{ccc4})--(\ref{ccc222}) that belong to $P_{n}$.

Let $W = P_{2k + 3} \cap \Gamma$. Then $W$ is generated (as an additive group) by the products $c_1 \dots c_m$ $(m \ge 1)$ of commutators $c_i \in \Lambda$ $(1 \le i \le m)$ such that $c_1 \dots c_m \in P_{2k+3}$. Note that at least one of the commutators $c_i$ is of odd length.

Let $W'$ be the subgroup of $W$ generated by those products $c_1 \dots c_m \in P_{2k+3}$ of commutators that contain either a commutator $c_i$ of length at least $4$ or at least $2$ commutators $c_i, c_j$ $(i \ne j)$ of length $3$.
One can easily check that $W'$ can be generated by the products $c_1 \dots c_m \in \mathcal P$ that belong to $P_{2k+3}$ and have $\ell (c_m) \ge 4 $ or $\ell (c_{m-1}) = \ell (c_m) = 3$. Note that the latter set of generators of $W'$ is the set of all elements of the forms (\ref{ccc4})--(\ref{ccc33}) that belong to $P_{2k+3}$. It follows that $W' \subset ( P_{2k+3} \cap \Gamma^{(4)}) $.

Note that, by Theorem \ref{Spechtbasis}, $W$ is a free abelian group with the Specht basis $S$ consisting of certain products $c_1 c_2 \dots c_m$ of commutators; these products $c_1 c_2 \dots c_m \in S$ were described in Theorem \ref{Spechtbasis}. One can easily check that a basis of $W'$ is formed by the products $c_1 c_2 \dots c_m \in S$ such that either the commutator $c_m$ is of length at least $4$ or the commutators $c_{m-1}, c_{m}$ are of length $3$. It follows that $W/W'$ is a free abelian group whose basis is formed by the images of the products $c_1 c_2 \dots c_m \in S$ such that $c_1, c_2, \dots , c_{m-1}$ are commutators of length $2$ and $c_m$ is a commutator of length $3$.

Let
\[
c_{i_1 i_2 \dots  i_{2k+3}} = [x_{i_1}, x_{i_2}] \dots [x_{i_{2k-1}}, x_{i_{2k}}] [x_{i_{2k+1}}, x_{i_{2k+2}}, x_{i_{2k+3}}].
\]
Let
\begin{multline*}
C = \Big\{ c_{i_1 \dots  i_{2k+3}} \mid \{ i_1, i_2,  \dots , i_{2k+3} \} = \{ 1,2, \dots , 2k+3 \}; \ i_1 > i_2, \dots \\
\dots , i_{2k-1} > i_{2k}; \ i_{2k+1} > i_{2k+2}, i_{2k+3}; \ i_1 < i_3 < \dots < i_{2k-1} \Big\} .
\end{multline*}
Then $C$ coincides with the set of the products $c_1 c_2 \dots c_m \in S$ above whose images form a basis of $W/W'$. Hence, we have

\begin{lemma}\label{basisW2W2'}
The set $\mathcal C = \{ c + W' \mid c \in C \}$ is a basis of $W / W'$.
\end{lemma}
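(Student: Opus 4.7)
The plan is to derive the lemma directly from Theorem \ref{Spechtbasis} together with the two structural observations made in the paragraphs immediately preceding the lemma statement. First, I would invoke Theorem \ref{Spechtbasis} to record that $W = P_{2k+3} \cap \Gamma$ is a free abelian group with the Specht basis $S$, and then combine this with the already-noted fact that a basis of $W'$ is the subset $S' \subseteq S$ consisting of those Specht products $c_1 c_2 \dots c_m$ for which either $\ell(c_m) \ge 4$ or $\ell(c_{m-1}) = \ell(c_m) = 3$. Since $W$ is free abelian on $S$ and $W'$ is the direct summand spanned by $S'$, the quotient $W/W'$ is free abelian with basis $\{s + W' \mid s \in S \setminus S'\}$.

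The substantive step is to verify that $S \setminus S'$ coincides, as a set, with $C$. Any element $c_1 \dots c_m \in S \setminus S'$ has all factors of length $2$ or $3$ and at most one factor of length $3$; since the total degree is $2k+3$, which is odd, exactly one $c_i$ must have odd length, and by condition (iii) of Theorem \ref{Spechtbasis} this factor is forced to be $c_m$. Hence $m = k+1$, with $c_1, \dots, c_k$ commutators of length $2$ and $c_m$ a commutator of length $3$.

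I would then write these factors as $c_j = [x_{i_{2j-1}}, x_{i_{2j}}]$ for $1 \le j \le k$ and $c_{k+1} = [x_{i_{2k+1}}, x_{i_{2k+2}}, x_{i_{2k+3}}]$, and translate the Specht conditions: condition (i) gives $\{i_1, \dots, i_{2k+3}\} = \{1, \dots, 2k+3\}$; condition (ii) applied to the length-$2$ factors gives $i_{2j-1} > i_{2j}$ for $1 \le j \le k$, and applied to $c_{k+1}$ gives $i_{2k+1} > i_{2k+2}$ and $i_{2k+1} > i_{2k+3}$; condition (iv) applied to the consecutive length-$2$ factors gives $i_1 < i_3 < \dots < i_{2k-1}$. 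These conditions are exactly the ones defining the set $C$, so $S \setminus S' = C$ and the lemma follows.

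There is no real obstacle here: the lemma is essentially an unpacking of Theorem \ref{Spechtbasis} together with the observations already established in the text, and the only task is the careful bookkeeping to match the four Specht conditions with the defining conditions of $C$ in the specific shape $k$ length-$2$ commutators followed by a single length-$3$ commutator.
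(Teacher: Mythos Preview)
Your proposal is correct and follows exactly the approach of the paper: the lemma is stated in the paper as an immediate consequence of the observations in the paragraphs preceding it (that $W$ has Specht basis $S$, that $W'$ is freely spanned by the subset $S'\subseteq S$, and that $S\setminus S'$ coincides with $C$), and your write-up is a careful unpacking of precisely those observations.
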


Note that $W' \subset (P_{2k+3} \cap \Gamma^{(4)}) \subseteq W$. It follows immediately from Proposition \ref{generatorsaddgrp} that the group $(P_{2k+3} \cap \Gamma^{(4)})/ W'$ is generated by the images of the elements of the forms (\ref{ccc32})--(\ref{ccc222}) that belong to $P_{2k+3}$.

We claim that all elements of the form (\ref{ccc222}) belonging to $P_{2k+3}$ are contained, modulo $W'$,  in the subgroup $U$ generated by the elements of the form (\ref{ccc32}) that belong to $P_{2k+3}$. Indeed, if suffices to check that the elements
\begin{multline}\label{prodcomm}
[x_{j_1} , x_{j_2}] \dots [x_{j_{2k-7}} , x_{j_{2k-6}}] [x_{j_{2k-5}}, x_{j_{2k-4}}, x_{j_{2k-3}} ]
\\
\times \bigl( [x_{i_1}, x_{i_2}] [x_{i_3}, x_{i_4}] + [x_{i_1}, x_{i_3}] [x_{i_2}, x_{i_4}] \bigr) [x_{i_5}, x_{i_6}]
\end{multline}
belonging to $P_{2k+3}$ are contained  in $U + W'$ because all elements of the form (\ref{ccc222}) belonging to $P_{2k+3}$ are, modulo $W'$, linear combinations of the elements above.

Recall that $I_1$ is the two-sided ideal of $\mathbb Z \langle X \rangle$ generated by all polynomials of the form (\ref{cc4}), that is, by all commutators of length $4$ in $x_l$ $(l \in \mathbb N)$. Since all commutators in variables $x_l$ $(l \in \mathbb N)$ of length at least $2$ commute modulo $I_1$, we have
\begin{align*}
& [x_{j_{2k-5}}, x_{j_{2k-4}}, x_{j_{2k-3}} ] \bigl( [x_{i_1}, x_{i_2}] [x_{i_3}, x_{i_4}] + [x_{i_1}, x_{i_3}] [x_{i_2}, x_{i_4}] \bigr)
\\
\equiv & \bigl( [x_{i_1}, x_{i_2}] [x_{j_{2k-5}}, x_{j_{2k-4}}, x_{j_{2k-3}} ] + [x_{i_1}, x_{j_{2k-3}}] [x_{j_{2k-5}}, x_{j_{2k-4}}, x_{i_{2}} ] \bigr) [x_{i_3}, x_{i_4}]
\\
+ & \bigl( [x_{i_4}, x_{i_3}] [x_{j_{2k-5}}, x_{j_{2k-4}}, x_{i_{2}} ] + [x_{i_4}, x_{i_2}] [x_{j_{2k-5}}, x_{j_{2k-4}}, x_{i_{3}} ] \bigr) [x_{i_1}, x_{j_{2k-3}}]
\\
+ & \bigl( [x_{i_1}, x_{j_{2k-3}}] [x_{j_{2k-5}}, x_{j_{2k-4}}, x_{i_{3}} ] + [x_{i_1}, x_{i_{3}}] [x_{j_{2k-5}}, x_{j_{2k-4}}, x_{j_{2k-3}} ] \bigr)
\\
\times & [x_{i_2}, x_{i_4}]  \pmod{I_1} .
\end{align*}
Hence, each element of the form (\ref{prodcomm}) is, modulo $I_1$, a linear combination of elements of the form (\ref{ccc32}). Since $(P_{2k+3} \cap I_1) \subset W'$, the claim follows.

It follows that the group $(P_{2k+3} \cap \Gamma^{(4)})/ W'$ is generated by the images of the elements (\ref{ccc32}) belonging to $P_{2k+3}$, that is, by the elements $g + W'$ where
\begin{equation}
\label{generatorsmodw2'}
g = c_1 \dots c_{m} ([x_{i_1}, x_{i_2}] [ x_{i_3}, x_{i_4}, x_{i_5}] - \mbox{\rm sgn} \, (\tau )
[x_{i_{\tau (1)}}, x_{i_{\tau (2)}}] [ x_{i_{\tau (3)}}, x_{i_{\tau (4)}}, x_{i_{\tau (5)}}]),
\end{equation}
$g \in P_{2k+3}$, $c_s \in \Lambda,$ $\tau \in S_5$. It is easy to check that if for some $s, 1 \le s \le m$, the commutator $c_s$ is of length at least $3$ then the product $g$ of the form (\ref{generatorsmodw2'}) belongs to $W'$. Hence, we have
\begin{lemma}\label{generatorsP2k3G4}
The (additive) group $(P_{2k+3} \cap \Gamma^{(4)})/W'$ is generated by the elements $g +W'$ where $g$ is of the form {\rm (\ref{generatorsmodw2'})}, $m = k-1$ and $c_1, \dots , c_{(k-1)} \in \Lambda$ are commutators of length $2$.
\end{lemma}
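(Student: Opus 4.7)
The plan is to argue directly from the definitions: a generator $g + W'$ of the form (\ref{generatorsmodw2'}) in which some $c_s$ has length at least $3$ already satisfies $g \in W'$, so such generators contribute nothing to the quotient $(P_{2k+3} \cap \Gamma^{(4)})/W'$. Once all $c_s$ are forced to have length $2$, a degree count pins down $m = k-1$.

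The first step is to write $g = \pi_1 - \mathrm{sgn}(\tau)\, \pi_2$, where
\[
\pi_1 = c_1 \cdots c_m \, [x_{i_1}, x_{i_2}]\, [x_{i_3}, x_{i_4}, x_{i_5}], \quad
\pi_2 = c_1 \cdots c_m \, [x_{i_{\tau(1)}}, x_{i_{\tau(2)}}]\, [x_{i_{\tau(3)}}, x_{i_{\tau(4)}}, x_{i_{\tau(5)}}].
\]
Both $\pi_1$ and $\pi_2$ are products of commutators in $\Lambda$. Since $\tau \in S_5$ only permutes the indices $i_1, \dots, i_5$, the terms $\pi_1$ and $\pi_2$ have the same multidegree; combined with $g \in P_{2k+3}$, this forces $\pi_1, \pi_2 \in P_{2k+3}$ individually.

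The second step is a case analysis on the lengths of $c_1, \dots, c_m$. If some $c_s$ has $\ell(c_s) \geq 4$, then each of $\pi_1, \pi_2$ is a product of commutators containing a factor of length at least $4$, so each lies in the generating set of $W'$. If instead $\ell(c_s) \leq 3$ for all $s$ but some $c_s$ has length exactly $3$, then each of $\pi_1, \pi_2$ contains at least two length-$3$ commutators (namely that $c_s$ together with the length-$3$ commutator coming from the parenthetical factor in (\ref{generatorsmodw2'})), and so again belongs to the generating set of $W'$. In either situation, $g = \pi_1 - \mathrm{sgn}(\tau)\, \pi_2 \in W'$, hence $g + W' = 0$ in the quotient.

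The third and final step is the degree count. After discarding the generators above, we may assume every $c_s$ has length $2$. Since $g \in P_{2k+3}$ has total degree $2k+3$, the factors $c_1, \dots, c_m$ contribute $2m$ and the parenthetical factor contributes $5$, so $2m + 5 = 2k+3$, giving $m = k-1$. I do not foresee any genuine obstacle; the entire argument is a straightforward unpacking of the definition of $W'$, with the only mild subtlety being the verification that $\pi_1$ and $\pi_2$ (rather than just their difference $g$) sit in $P_{2k+3}$, which is what allows us to invoke the defining generating set of $W' \subseteq P_{2k+3}$.
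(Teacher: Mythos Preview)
Your proposal is correct and follows essentially the same approach as the paper: the paper simply asserts ``It is easy to check that if for some $s$, $1 \le s \le m$, the commutator $c_s$ is of length at least $3$ then the product $g$ of the form (\ref{generatorsmodw2'}) belongs to $W'$,'' and you have supplied precisely this check by splitting $g = \pi_1 - \mathrm{sgn}(\tau)\,\pi_2$ and observing each summand lies in the defining generating set of $W'$. The only minor point is that your claim ``combined with $g \in P_{2k+3}$, this forces $\pi_1, \pi_2 \in P_{2k+3}$ individually'' tacitly uses $g \ne 0$ (otherwise the common multidegree of $\pi_1,\pi_2$ need not be multilinear), but when $g = 0$ there is nothing to prove.
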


Let $H$ be the free abelian group on a basis
\[
\Big\{ h_{i_1 i_2 \dots i_{2k+3} } \mid \{ i_1, i_2, \dots , i_{2k+3} \} = \{ 1, 2, \dots , 2k+3 \} \Big\} .
\]
Note that $H$ is isomorphic to the additive group of the group ring $\mathbb Z S_{2k+3}$ of the symmetric group $S_{2k+3}$ with an isomorphism
\[
\eta : h_{i_1 \dots i_{2k+3}} \rightarrow  \left(
\begin{array}{cccc}
1 & 2 & \dots & 2k+3 \\
i_1 & i_2 & \dots & i_{2k+3}
\end{array}
\right) \in S_{2k+3}.
\]

Define a homomorphism $\psi : H \rightarrow W/W'$ by
\[
\psi (h_{i_1 \dots i_{2k+3}}) = c_{i_1 \dots i_{2k+3}} + W'.
\]
Let $Q$ be the subgroup of $H$ generated by all elements
\begin{gather}\label{generatorQ1}
h_{i_1 \dots i_{2l-2} i_{2l-1} i_{2l} i_{2l+1} \dots i_{2k+3}} + h_{i_1 \dots  i_{2l-2} i_{2l} i_{2l-1} i_{2l+1} \dots i_{2k+3}} \ \ (1 \le l \le k+1),
\\
\label{generatorQ2}
h_{i_1 \dots i_{2l-4} i_{2l-3} i_{2l-2} i_{2l-1} i_{2l} i_{2l+1} \dots i_{2k+3}} - h_{i_1 \dots i_{2l-4} i_{2l-1} i_{2l} i_{2l-3} i_{2l-2} i_{2l+1} \dots i_{2k+3}}
\\
(2 \le l \le k) \nonumber
\end{gather}
and
\begin{equation}\label{generatorQ3}
h_{i_1 \dots i_{2k} i_{2k+1} i_{2k+2} i_{2k+3} } + h_{i_1 \dots i_{2k} i_{2k+2} i_{2k+3} i_{2k+1} } + h_{i_1 \dots i_{2k} i_{2k+3} i_{2k+1} i_{2k+2} } .
\end{equation}

\begin{lemma}
$\ker \psi = Q$.
\end{lemma}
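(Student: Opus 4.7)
My plan is to prove $\ker\psi = Q$ in two directions. The inclusion $Q \subseteq \ker\psi$ will be a direct check that each listed generator of $Q$ maps into $W'$ under $\psi$ (indeed, mostly into the ideal generated by $[x_a, x_b, x_c, x_d]$, or to zero identically). The reverse inclusion $\ker\psi \subseteq Q$ will follow from showing that the induced map $\bar\psi : H/Q \to W/W'$ is an isomorphism. Since $W/W'$ is free abelian with basis $\mathcal C = \{c + W' : c \in C\}$ by Lemma \ref{basisW2W2'} and $\bar\psi$ visibly hits every element of $\mathcal C$, it suffices to show that $\{h_c + Q : c \in C\}$ $\mathbb Z$-spans $H/Q$.

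For the containment $Q \subseteq \ker\psi$ I would verify the four families separately. The relations (\ref{generatorQ1}) with $1 \le l \le k$, and with $l = k+1$, map to identically zero expressions because $[a,b] + [b,a] = 0$ and $[a,b,c] + [b,a,c] = 0$ hold in any ring. The relation (\ref{generatorQ3}) maps to the Jacobi sum $[a,b,c] + [b,c,a] + [c,a,b]$, which is identically zero in any associative ring. For (\ref{generatorQ2}) the image is of the form $(\cdots)(uv - vu)(\cdots) c$ where $u, v$ are length-two commutators; the identity $[u,v] = [[u, x_{i_{2l-1}}], x_{i_{2l}}] - [[u, x_{i_{2l}}], x_{i_{2l-1}}]$ (which is verified by applying the Leibniz rule to $[u, x_{i_{2l-1}} x_{i_{2l}} - x_{i_{2l}} x_{i_{2l-1}}]$) shows $uv - vu \in I_1$, so the image lies in $P_{2k+3} \cap I_1 \subseteq W'$.

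For the reverse inclusion I would reduce any generator $h_{i_1 \dots i_{2k+3}}$ to a $\mathbb Z$-combination of the $h_c$'s in three stages modulo $Q$. Stage (i): (\ref{generatorQ1}) for $l = 1, \dots, k$ allows me, at the cost of a sign, to arrange $i_{2l-1} > i_{2l}$ within each of the first $k$ pairs. Stage (ii): (\ref{generatorQ2}) provides sign-free transpositions of adjacent pairs, so by bubble sort I may further assume $i_1 < i_3 < \dots < i_{2k-1}$. Stage (iii): for the length-three tail, the cyclic relation (\ref{generatorQ3}) together with the $(l = k+1)$-antisymmetry of (\ref{generatorQ1}) yields on the six orderings of the tail indices $\{i_{2k+1}, i_{2k+2}, i_{2k+3}\}$ four independent $\mathbb Z$-relations; a short computation (using $h_{abc} \equiv -h_{bac}$ and $h_{abc} + h_{bca} + h_{cab} \equiv 0$) expresses every tail ordering as a $\mathbb Z$-combination of the two ``max-first'' orderings, i.e.\ those in which $i_{2k+1} = \max\{i_{2k+1}, i_{2k+2}, i_{2k+3}\}$. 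Combining the three stages, every $h_\sigma + Q$ is a $\mathbb Z$-combination of the $h_c + Q$ with $c \in C$.

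Finally, because $\bar\psi$ sends this spanning set bijectively (up to signs) onto the basis $\mathcal C$ of the free abelian group $W/W'$, the spanning set must itself be $\mathbb Z$-linearly independent in $H/Q$; hence $\bar\psi$ is an isomorphism and $\ker\psi = Q$. The main obstacle I anticipate is the tail-normalization in stage (iii): verifying that the swap and the two cyclic Jacobi relations on the six tail permutations collapse to a free $\mathbb Z$-module of rank exactly two per tail index set, with the two max-first orderings as an explicit basis. This is a short but sign-sensitive linear-algebra calculation in $\mathbb Z^6$ which I would carry out case by case.
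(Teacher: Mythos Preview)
Your proposal is correct and follows essentially the same route as the paper's proof: the paper simply asserts that the generators of $Q$ lie in $\ker\psi$ and that $H/Q$ is generated by the set $\mathcal D = \{h_{i_1\dots i_{2k+3}} + Q\}$ satisfying the conditions defining $C$, then concludes via Lemma~\ref{basisW2W2'} exactly as you do. Your write-up supplies the details the paper omits (the Jacobi/antisymmetry checks for $Q\subseteq\ker\psi$, the three-stage normalization for the spanning claim), and your tail calculation in stage~(iii) is correct---the ``up to signs'' caveat is in fact unnecessary, since $\bar\psi$ sends each normalized $h$ exactly to the corresponding element of $\mathcal C$.
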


\begin{proof}
It is clear that the elements of the forms (\ref{generatorQ1}), (\ref{generatorQ2}) and (\ref{generatorQ3}) belong to $\ker \psi$. Hence, $Q \subseteq \ker \psi$. On the other hand, one can easily check that $H/Q$ is generated by the set $\mathcal D = \{ d + Q \mid d \in D \}$ where
\begin{multline*}
D = \Big\{ h_{i_1 \dots  i_{2k+3}} \mid i_1 > i_2, \dots , i_{2k-1} > i_{2k};
\\
i_{2k+1} > i_{2k+2}, i_{2k+3}; \ i_1 < i_3 < \dots < i_{2k-1} \Big\} .
\end{multline*}
Clearly, $\mu (D) = \mathcal C$ so, by Lemma \ref{basisW2W2'}, $\mu (D)$ is a basis of $W/W'$. It follows that $\mathcal D$ is a basis of $H/Q$ and $\ker \psi = Q$, as required.
\end{proof}

Let $P$ be the subgroup of $H$ generated by all elements
\[
h_{i_1 \dots i_{2k+3}} - \mbox{sgn } (\sigma ) \ h_{ i_{ \sigma (1)} \dots i_{\sigma (2k+3)} }
\]
and
\[
h_{i_1 \dots i_{2k} i_{2k+1} i_{2k+2} i_{2k+3} } + h_{i_1 \dots i_{2k} i_{2k+2} i_{2k+3} i_{2k+1} } + h_{i_1 \dots i_{2k} i_{2k+3} i_{2k+1} i_{2k+2} }
\]
where $\{ i_1, \dots , i_{2k+3} \} = \{ 1, \dots , 2k+3 \}$, $\sigma \in S_{2k+3}$. Note that $Q \subset P$ because a generating set of $Q$ is a subset of a generating set of $P$. By Lemma \ref{generatorsP2k3G4}, the (additive) group $(P_{2k+3} \cap \Gamma^{(4)}) / W'$ is generated by the elements
\[
c_{j_1 \dots j_{2k-2} i_1 \dots i_{5}} - \mbox{sgn }(\tau ) \ c_{j_1 \dots j_{2k-2} i_{\tau (1)} \dots i_{\tau (5)} } +W'
\]
where $\{ j_1, \dots j_{2k-2}, i_1, \dots , i_{5} \} = \{ 1, \dots , 2k+3 \}$ and $\tau \in S_5$. It follows that
$\psi^{-1} \bigl( (P_{2k+3} \cap \Gamma^{(4)}) / W' \bigr)$ is generated by the elements
\[
h_{j_1 \dots j_{2k-2} i_1 \dots i_{5}} - \mbox{sgn }(\tau ) \ h_{j_1 \dots j_{2k-2} i_{\tau (1)} \dots i_{\tau (5)} }
\]
together with $\ker \psi = Q$ so
\begin{equation}\label{PGP}
\psi^{-1} \bigl( (P_{2k+3} \cap \Gamma^{(4)}) / W' \bigr) \subseteq P.
\end{equation}
(In fact, one can check that $\psi^{-1} \bigl( (P_{2k+3} \cap \Gamma^{(4)}) / W' \bigr) =  P$ but for our purpose (\ref{PGP}) is sufficient.)

Note that $v_k = c_{1 2  \dots (2k+3)} \in P_{2k+3} \cap \Gamma$ so to prove that $v_k \notin T^{(4)}$ (that is, to prove Theorem  \ref{maintheorem1}) it suffices to prove that $v_k \notin (P_{2k+3} \cap \Gamma \cap T^{(4)}) = (P_{2k+3} \cap \Gamma^{(4)})$. To prove the latter it suffices to check that $(v_k + W') = \psi (h_{1 2 \dots (2k+3)} ) \notin (P_{2k+3} \cap \Gamma^{(4)}) / W' $, that is, $h_{1 2 \dots (2k+3)} \notin \psi^{-1} \bigl( (P_{2k+3} \cap \Gamma^{(4)}) / W' \bigr) $. By (\ref{PGP}), to prove Theorem \ref{maintheorem1} it suffices to prove the following:

\begin{lemma}\label{hnotinP}
$h_{1 2 \dots (2k+3)} \notin P$.
\end{lemma}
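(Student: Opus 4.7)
The plan is to construct a homomorphism $\phi \colon H \to \mathbb Z / 3 \, \mathbb Z$ that annihilates every generator of $P$ while sending $h_{1 2 \dots (2k+3)}$ to a non-zero element. The natural candidate comes from the sign character: via the isomorphism $\eta \colon H \to \mathbb Z [S_{2k+3}]$ introduced in the text, identify each basis element $h_{i_1 \dots i_{2k+3}}$ with the permutation $\pi \in S_{2k+3}$ defined by $\pi (j) = i_j$, and define $\phi$ on the basis by $\phi (h_\pi) = \mbox{sgn} (\pi) \bmod 3$, extending $\mathbb Z$-linearly. Since $H$ is free abelian on $\{ h_\pi \}$, this is well defined.

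To finish I would verify that $P \subseteq \ker \phi$ via two routine sign computations. For a generator of the first kind, $h_{i_{\sigma (1)} \dots i_{\sigma (2k+3)}}$ corresponds under $\eta$ to the permutation $\pi \sigma$, and multiplicativity of the sign character gives $\phi (h_\pi - \mbox{sgn}(\sigma) h_{\pi \sigma}) = \mbox{sgn} (\pi) - \mbox{sgn}(\sigma)^2 \mbox{sgn}(\pi) = 0$. For the three-term Jacobi-type generator, the three summands correspond to $\pi, \pi \rho, \pi \rho^2$ where $\rho$ is the $3$-cycle $(2k+1, \, 2k+2, \, 2k+3)$ acting on positions; since $\rho$ is even, all three summands share a common sign, so their $\phi$-image is $3 \, \mbox{sgn}(\pi) \equiv 0 \pmod 3$. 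This is precisely the place where the factor $3$ enters and reflects the $3$-torsion in $T^{(3,2)} / T^{(4)}$. Since $\phi (h_{1 2 \dots (2k+3)}) = \mbox{sgn}(\mbox{id}) = 1 \ne 0$ in $\mathbb Z / 3 \, \mathbb Z$, we conclude $h_{1 2 \dots (2k+3)} \notin P$.

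No essential obstacle is expected: the proof rests on recognising that the Jacobi-type relation is the unique obstruction modulo $3$ preventing the sign character from being trivial on $H/P$, and once this is spotted the two sign computations above are immediate.
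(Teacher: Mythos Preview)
Your proof is correct and is essentially the same as the paper's: the paper defines the sign homomorphism $\mu \colon H \to \mathbb Z$, computes that $\mu$ vanishes on the skew-symmetry generators and equals $\pm 3$ on the three-term Jacobi generators, so $\mu(P) = 3\mathbb Z$ while $\mu(h_{12\dots(2k+3)}) = 1$. Your version simply composes $\mu$ with the reduction map $\mathbb Z \to \mathbb Z/3\mathbb Z$ from the outset, which is a cosmetic difference.
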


\begin{proof}
Let $\mu : H \rightarrow \mathbb Z$ be the homomorphism of $H$ into $\mathbb Z$ defined by
\[
\mu (h_{i_1 i_2 \dots i_{2k+3}})= \mbox{sgn} (\rho )
\]
where $\rho = \left(
\begin{array}{cccc}
1 & 2 & \dots & 2k+3 \\
i_1 & i_2 & \dots & i_{2k+3}
\end{array}
\right)$. Then
\[
\mu (h_{i_1 i_2 \dots i_{2k+3}} - \mbox{sgn } (\sigma ) \ h_{i_{\sigma (1)} i_{\sigma (2)} \dots i_{\sigma (2k+3)}}) = \mbox{sgn } (\rho ) - \mbox{sgn } (\sigma ) \ \mbox{sgn } (\sigma \rho )  = 0
\]
and
\[
\mu (h_{i_1 \dots i_{2k} i_{2k+1} i_{2k+2} i_{2k+3} } + h_{i_1 \dots i_{2k} i_{2k+2} i_{2k+3} i_{2k+1} } + h_{i_1 \dots i_{2k} i_{2k+3} i_{2k+1} i_{2k+2} }) = \pm 3
\]
so $\mu (P) = 3 \, \mathbb Z$. On the other hand, $\mu (h_{1 \dots (2k+3)}) = 1 \notin 3 \, \mathbb Z $, therefore $h_{1 \dots (2k+3)} \notin P$.

This completes the proof of Lemma \ref{hnotinP} and hence of Theorem \ref{maintheorem1}.
\end{proof}

\section{Proof of Theorem 1.2 }

Recall that $T^{(4)}$ and $T^{(3,2)}$ are multihomogeneous $T$-ideals of $\mathbb Z \langle X \rangle$. Hence, by Corollary \ref{TcapGammaleft}, $T^{(4)}$  and $T^{(3,2)}$ are generated, as left ideals of $\mathbb Z \langle X \rangle$,  by the sets $\Gamma^{(4)} = T^{(4)} \cap \Gamma$  and $\Gamma^{(3,2)} = T^{(3,2)} \cap \Gamma$, respectively. By Lemma \ref{IcapGamma},
\[
T^{(4)}= \Gamma^{(4)} \oplus \bigoplus_{s \ge 1; \ i_1 \le \dots \le i_s} x_{i_1} x_{i_2} \dots x_{i_s} \Gamma^{(4)},
\]
\[
T^{(3,2)}= \Gamma^{(3,2)} \oplus \bigoplus_{s \ge 1; \ i_1 \le \dots \le i_s} x_{i_1} x_{i_2} \dots x_{i_s} \Gamma^{(3,2)}.
\]

Let
\begin{multline*}
\mathcal E' = \Big\{ [x_{i_1}, x_{i_2}] \dots [ x_{i_{2k-1}}, x_{i_{2k}}] [x_{i_{2k+1}}, x_{i_{2k+2}}, x_{i_{2k+3}}] \mid
\\
k \ge 1, \, i_1 < i_2 < \dots < i_k \Big\} .
\end{multline*}
By Corollary \ref{GammaIcapGamma}, to prove Theorem \ref{maintheorem2} it suffices to prove the following lemma.

\begin{lemma}\label{lemmamaintheorem2}
The set $\{ e' + \Gamma^{(4)} \mid e' \in \mathcal E' \}$ is a basis of the elementary abelian $3$-group $\Gamma^{(3,2)}/\Gamma^{(4)}$ over $\mathbb F_3 = \mathbb Z / 3 \, \mathbb Z$.
\end{lemma}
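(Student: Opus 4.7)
The plan is to decompose $\Gamma^{(3,2)}/\Gamma^{(4)}$ according to its multihomogeneous components (both ideals being multihomogeneous) and to show that each component is either trivial or cyclic of order $3$, generated by the image of the unique element of $\mathcal E'$ whose underlying index set matches the multi-degree.  Linear independence will then be immediate from multihomogeneity together with Theorem~\ref{maintheorem1}: the elements of $\mathcal E'$ lie in pairwise distinct multihomogeneous components (indexed by the set $\{i_1,\dots,i_{2k+3}\}$), so their $\mathbb F_3$-independence reduces to the assertion that each $e'\notin\Gamma^{(4)}$, which follows from $v_k\notin T^{(4)}$ upon relabeling free generators; the bound $3e'\in\Gamma^{(4)}$ is the known $3v_k\in T^{(4)}$.

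For spanning, I would first verify, by an argument closely analogous to Lemma~\ref{generators_T4_left_ideal}, that $T^{(3,2)}$ is generated as a \emph{left} ideal of $\mathbb Z\langle X\rangle$ by the polynomials (\ref{cck}), (\ref{cc33}), (\ref{cc32}), (\ref{cc222}) together with $[x_{j_1},x_{j_2}][x_{j_3},x_{j_4},x_{j_5}]$ ($j_l\in\mathbb N$).  The key computation is
\[
\bigl[[y_1,y_2][y_3,y_4,y_5],\,z\bigr]=[y_1,y_2,z][y_3,y_4,y_5]+[y_1,y_2][y_3,y_4,y_5,z],
\]
whose first summand has the form (\ref{cc33}) and whose second summand lies in the left ideal already generated by (\ref{cck}).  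Lemma~\ref{IcapGamma} then identifies $\Gamma^{(3,2)}$ as the left ideal of $\Gamma$ generated by the same list, so $\Gamma^{(3,2)}/\Gamma^{(4)}$ is generated, as an abelian group, by the images of $g\cdot[x_{j_1},x_{j_2}][x_{j_3},x_{j_4},x_{j_5}]$ with $g\in\mathcal P$.  The Jacobi identities $[[a,b],[c,d]]=[a,b,c,d]-[a,b,d,c]\in T^{(4)}$ and $[[a,b],[c,d,e]]\in L_5\subseteq T^{(4)}$ show that all commutators of length $\ge 2$ commute modulo $T^{(4)}$, so whenever $g$ contains a factor of length $\ge 3$ I may slide it next to $[x_{j_3},x_{j_4},x_{j_5}]$: a factor of length $\ge 4$ is immediately in $\Gamma^{(4)}$ by~(\ref{cck}), while a length-$3$ factor together with $[x_{j_3},x_{j_4},x_{j_5}]$ lies in $\Gamma^{(4)}$ by~(\ref{cc33}).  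Hence $\Gamma^{(3,2)}/\Gamma^{(4)}$ is spanned by the images of
\[
c_{i_1\dots i_{2k+3}}:=[x_{i_1},x_{i_2}]\dots[x_{i_{2k-1}},x_{i_{2k}}][x_{i_{2k+1}},x_{i_{2k+2}},x_{i_{2k+3}}],\qquad k\ge 1,\ i_l\in\mathbb N.
\]

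Next, I would show that $c_{i_1\dots i_{2k+3}}\in\Gamma^{(4)}$ whenever the indices are not pairwise distinct.  If the repetition occurs within the last five positions $i_{2k-1},\dots,i_{2k+3}$, applying~(\ref{cc32}) with $\sigma\in S_5$ the transposition swapping the two repeated positions yields $2\,c_{i_1\dots i_{2k+3}}\in T^{(4)}$; combined with $3\,c_{i_1\dots i_{2k+3}}\in T^{(4)}$ (which holds in every multidegree, since $3\,[a,b][c,d,e]\in T^{(4)}$ for all $a,b,c,d,e$), this forces $c_{i_1\dots i_{2k+3}}\in T^{(4)}\cap\Gamma=\Gamma^{(4)}$.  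If the repetition instead involves an earlier $2$-commutator, the mod-$T^{(4)}$ commutativity of $2$-commutators just established lets me transpose the offending factor into the last-five-slot block and reduce to the previous case.  Consequently $\Gamma^{(3,2)}/\Gamma^{(4)}$ is spanned by the multilinear $c_{i_1\dots i_{2k+3}}$ with pairwise distinct $i_l$.

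Finally, for such a multilinear $c_{i_1\dots i_{2k+3}}$, the three families of relations --- (i)~$[x,y]=-[y,x]$ within each $2$-commutator (sign $-1$ for the transposition of positions $(2i-1,2i)$), (ii)~the mod-$T^{(4)}$ commutativity of $2$-commutators (sign $+1$ for the block swap $(2i-1,2j-1)(2i,2j)$), and (iii)~(\ref{cc32}) on the last five positions (sign $\mathrm{sgn}(\sigma)$ for $\sigma\in S_5$) --- generate, as a subgroup of $S_{2k+3}$, the full symmetric group with signs matching the usual sign of $S_{2k+3}$; the group-theoretic verification is essentially the one carried out in Section~3 (the generated group is transitive, contains a transposition, and hence equals $S_{2k+3}$, and each generator is assigned its standard sign).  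It follows that $c_{i_1\dots i_{2k+3}}\equiv\pm e'\pmod{\Gamma^{(4)}}$ for the unique $e'\in\mathcal E'$ with index set $\{i_1,\dots,i_{2k+3}\}$, completing the proof of spanning.  The main obstacle is the repetition-killing step of the previous paragraph: it is precisely the interplay of (\ref{cc32}) (supplying a factor of $2$ via a single antisymmetrization) with the torsion fact $3\,[a,b][c,d,e]\in T^{(4)}$ that forces the non-multilinear products to vanish, and this use of $\gcd(2,3)=1$ is the heart of the whole reduction.
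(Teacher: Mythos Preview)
Your argument is correct and follows the same overall strategy as the paper's proof: establish that $\Gamma^{(3,2)}/\Gamma^{(4)}$ is spanned by the products $c_{i_1\dots i_{2k+3}}$, cut these down to $\mathcal E'$ via the alternating relations, and deduce linear independence from multihomogeneity together with Theorem~\ref{maintheorem1}. The only real difference is that the paper imports the identity $T^{(3,2)}=T^{(4)}+I^{(3,2)}$ from \cite{Kras12} (Lemma~\ref{T32} here) to get spanning immediately, whereas you reprove an equivalent statement from scratch.

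Two places in your self-contained route are under-argued. First, in your step~1 the phrase ``analogous to Lemma~\ref{generators_T4_left_ideal}'' covers only the two-sided--to--left conversion via centrality; you still need to show that every $[a_1,a_2][a_3,a_4,a_5]$ with arbitrary monomials $a_i$ lies, modulo $T^{(4)}$, in the left ideal generated by the single-variable versions $[x_{j_1},x_{j_2}][x_{j_3},x_{j_4},x_{j_5}]$. This requires a short induction on $\sum\deg a_i$ using the Leibniz rule (entirely parallel to the proof of Theorem~\ref{maintheorem3}), not just the displayed commutator identity. Second, your repetition-killing step does not quite work as stated when the two coinciding indices sit in two \emph{distinct} early $2$-commutators, since only one of them can be moved into the last five slots by a single factor swap; however, your own step~4 repairs this at once, because the full $S_{2k+3}$-alternation you derive from (\ref{cc32}) together with the obvious $2$-commutator symmetries holds for \emph{all} index tuples (not only multilinear ones), and applying the transposition of the two equal positions gives $2c\in T^{(4)}$ directly.
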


\begin{proof}
We claim that the set $\{ e' + \Gamma^{(4)} \mid e' \in \mathcal E' \}$ generates the (additive) group $\Gamma^{(3,2)}/\Gamma^{(4)}$. Indeed, let
\[
S = \{ [x_{i_1}, x_{i_2}, x_{i_3}][ x_{i_4}, x_{i_5}] \mid i_1 < i_2 < i_3 < i_4 < i_5 \}
\]
and let $I^{(3,2)}$ be the ideal in $\mathbb Z \langle X \rangle$ generated by $S$. We need the following lemma proved in \cite[Lemma 6.1]{Kras12}.

\begin{lemma}[see \cite{Kras12}]\label{T32}
$T^{(3,2)} = T^{(4)} + I^{(3,2)}$.
\end{lemma}

It follows from Lemma \ref{T32} that $T^{(3,2)}$ is generated as a two-sided ideal of $\mathbb Z \langle X \rangle$ by the set $\Gamma^{(4)} \cup S$. Hence, $\Gamma^{(3,2)}$ is generated as a two-sided ideal of $\Gamma$ by the same set $\Gamma^{(4)} \cup S$. In other words, $\Gamma^{(3,2)}/\Gamma^{(4)}$ is the ideal of the (commutative) algebra $\Gamma /\Gamma^{(4)}$ generated by the image of $S$. Since, for all $j_l \in \mathbb N$,
\[
[x_{j_1},x_{j_2},x_{j_3}] [x_{j_4}, x_{j_5}, x_{j_6}] \in \Gamma^{(4)} ,
\]
the (additive) group $\Gamma^{(3,2)}/\Gamma^{(4)}$ is generated by the products
\[
[x_{i_1}, x_{i_2}] \dots [x_{i_{2k-1}},x_{i_{2k}}] [x_{i_{2k+1}},x_{i_{2k+2}},x_{i_{2k+3}}] + \Gamma^{(4)} \qquad (k \ge 1, i_l \in \mathbb N).
\]
It follows easily from (\ref{cc32}) that
\begin{align*}
&[x_{i_1}, x_{i_2}] \dots [x_{i_{2k-1}},x_{i_{2k}}] [x_{i_{2k+1}},x_{i_{2k+2}},x_{i_{2k+3}}]
\\
\equiv & [x_{i_{\sigma (1)}}, x_{i_{\sigma (2)}}] \dots [x_{i_{\sigma (2k-1)}},x_{i_{\sigma (2k)}}] [x_{i_{\sigma (2k+1)}}, x_{i_{\sigma (2k+2)}}, x_{i_{\sigma (2k+3)}}] \pmod{\Gamma^{(4)}}
\end{align*}
for all permutations $\sigma \in S_{2k+3}$ so the group $\Gamma^{(3,2)}/\Gamma^{(4)}$ is generated by the elements
\[
[x_{i_1}, x_{i_2}] \dots [x_{i_{2k-1}},x_{i_{2k}}] [x_{i_{2k+1}},x_{i_{2k+2}},x_{i_{2k+3}}] + \Gamma^{(4)}
\]
such that $k \ge 1$ and $i_1 < i_2 < \dots < i_{2k+3}$. Thus, the set $\{ e' + \Gamma^{(4)} \mid e' \in \mathcal E' \}$ generates the group $\Gamma^{(3,2)}/\Gamma^{(4)}$, as claimed.

It follows easily from Theorem \ref{maintheorem1} that $e' \notin \Gamma^{(4)}$ for all $e' \in \mathcal E'$. Since $3 \cdot e' \in \Gamma^{(4)}$ $(e' \in \mathcal E')$, $\Gamma^{(3,2)}/\Gamma^{(4)}$ is a non-trivial elementary abelian $3$-group generated by the image of $\mathcal E'$.

It remains to check that the image of the set $\mathcal E'$ is linearly independent in $\Gamma^{(3,2)}/\Gamma^{(4)}$ over $\mathbb F_3 = \mathbb Z /3 \mathbb Z$. Suppose that $\alpha_1 e'_1 + \dots + \alpha_n e'_n \equiv 0 \pmod{\Gamma^{(4)}}$ $(\alpha_i \in \mathbb F_3, e_i' \in \mathcal E')$, that is,
\begin{equation}\label{lincomb}
\alpha_1 e'_1 + \dots + \alpha_n e'_n \in T^{(4)} \qquad (\alpha_i \in \mathbb F_3, e_i' \in \mathcal E').
\end{equation}
Note that the polynomials $e_1', \dots , e_n'$ belong to distinct multihomogeneous components of the ring $\mathbb Z \langle X \rangle$. Since $T^{(4)}$ is a multihomogeneous $T$-ideal, it follows from (\ref{lincomb}) that $\alpha_i e'_i \in T^{(4)}$ for each $i$, $1 \le i \le n$. Since $e'_i \notin T^{(4)}$, we have $\alpha_i =0$ for all $i$ $(\alpha_i \in \mathbb F_3)$. Thus, the set $\{ e' + \Gamma^{(4)} \mid e' \in \mathcal E' \}$  is linearly independent in $\Gamma^{(3,2)}/\Gamma^{(4)}$ over $\mathbb F_3$.

This completes the proof of Lemma \ref{lemmamaintheorem2} and of Theorem \ref{maintheorem2}.
\end{proof}

\section{Proof of Theorem 1.3 and of Corollary 1.4 }

Let $I$ be the ideal of $K \langle Y \rangle$ generated by the polynomials (\ref{c4})--(\ref{c222}). We will prove Theorem \ref{maintheorem3} by showing that $I = T^{(4)}$.

First we prove that $I \subseteq T^{(4)}$. It is clear that the elements (\ref{c4}) belong to $T^{(4)}$. Further, it is well-known (see, for instance, \cite[Theorem 3.4]{EKM09}, \cite[Lemma 1]{Gordienko07}, \cite[Lemma 2.1]{Kras12}, \cite[Lemma 2]{Latyshev65}) that, for all $a_1, \dots ,a_5 \in K \langle Y \rangle$, we have
\begin{gather}
\label{32+32T4-1}
[a_{1},a_{2},a_{3}][a_{4},a_{5}] +
[a_{1},a_{2},a_{4}][a_{3},a_{5}] \in T^{(4)},
\\
\label{32+32T4-2}
[a_{1},a_{2},a_{3}][a_{4},a_{5}] +
[a_{1},a_{4},a_{3}][a_{2},a_{5}] \in T^{(4)}.
\end{gather}
It follows that the elements (\ref{c32-1}) and (\ref{c32-2}) belong to $T^{(4)}$.

It is also well-known (see, for example, \cite[Theorem 3.2]{GL83}, \cite[Lemma 1]{Latyshev65}) that, for all $a_1, \dots ,a_6 \in K \langle Y \rangle$, we have \begin{equation}
\label{33T4}
[a_{1}, a_{2}, a_{3}] [a_{4}, a_{5}, a_{6}] \in T^{(4)}.
\end{equation}
Indeed, by (\ref{32+32T4-1}) we have
\[
[a_{1}, a_{2}, a_{3}] \bigl[ [a_{4}, a_{5}], a_{6}] + [a_{1}, a_{2}, [a_{4}, a_{5}] \bigr] [a_{3}, a_{6}] \in T^{(4)}.
\]
Since
\[
\bigl[ a_{1}, a_{2}, [a_{4}, a_{5}] \bigr] = [a_{1}, a_{2}, a_{4}, a_{5} ] - [a_{1}, a_{2}, a_{5}, a_{4} ]\in T^{(4)} ,
\]
we have $[a_{1}, a_{2}, a_{3}] [a_{4}, a_{5}, a_{6}] \in T^{(4)}$, as claimed. In particular, the elements (\ref{c33}) belong to $T^{(4)}$.

Now to prove that $I \subseteq T^{(4)}$ it remains to check that the elements (\ref{c222}) belong to $T^{(4)}$. By (\ref{32+32T4-2}), for all $a_1, \dots ,a_6 \in K \langle Y \rangle$ we have
\[
[a_{1}, a_{2}a_3, a_{4}][a_{5},a_{6}] + [a_{1},a_{5},a_{4}][a_{2} a_3, a_{6}] \in T^{(4)}
\]
so
\begin{align*}
& [a_{1}, a_{2}a_3, a_{4}][a_{5},a_{6}] + [a_{1},a_{5},a_{4}][a_{2} a_3, a_{6}]
\\
= & \bigl[ \bigl( a_2 [a_1, a_3] + [a_1, a_2] a_3 \bigr) , a_4 \bigr] [a_5, a_6] + [a_{1},a_{5},a_{4}]a_{2} [a_3, a_{6}] + [a_{1},a_{5},a_{4}][a_{2} , a_{6}] a_3
\\
= & a_2 [a_1, a_3, a_4] [a_5,a_6] + [a_2,a_4][a_1,a_3][a_5,a_6] + [a_1,a_2][a_3,a_4][a_5,a_6]
\\
+ & [a_1, a_2, a_4]a_3 [a_5,a_6] + [a_{1},a_{5},a_{4}]a_{2} [a_3, a_{6}] + [a_{1},a_{5},a_{4}][a_{2} , a_{6}] a_3 \in T^{(4)} .
\end{align*}
Note that
\begin{align*}
& a_2 [a_1, a_3, a_4] [a_5,a_6] + [a_{1},a_{5},a_{4}]a_{2} [a_3, a_{6}]
\\
= & a_2 \bigl( [a_1, a_3, a_4] [a_5,a_6] + [a_{1},a_{5},a_{4}] [a_3, a_{6}] \bigr) + [a_{1},a_{5},a_{4}, a_{2}] [a_3, a_{6}] \in T^{(4)}
\end{align*}
by (\ref{32+32T4-2}) and
\begin{align*}
& [a_1, a_2, a_4]a_3 [a_5,a_6] + [a_{1},a_{5},a_{4}][a_{2}, a_{6}] a_3
\\
= & \bigl( [a_1, a_2, a_4] [a_5,a_6] + [a_{1},a_{5},a_{4}][a_{2}, a_{6}] \bigr) a_3 -  [a_1, a_2, a_4] [a_5, a_6, a_3] \in T^{(4)}
\end{align*}
by (\ref{32+32T4-2}) and (\ref{33T4}) so $[a_2,a_4][a_1,a_3][a_5,a_6] + [a_1,a_2][a_3,a_4][a_5,a_6] \in T^{(4)}$. It follows that
\begin{multline*}
\bigl( [a_1,a_2][a_3,a_4] + [a_1,a_3][a_2,a_4] \bigr) [a_5,a_6]
\\
= [a_1,a_2][a_3,a_4][a_5,a_6] + [a_2,a_4][a_1,a_3][a_5,a_6] + \bigl[ a_1,a_3, [a_2,a_4] \bigr] [a_5,a_6] \in T^{(4)}.
\end{multline*}
In particular, the elements (\ref{c222}) belong to $T^{(4)}$.

Thus, all the generators (\ref{c4})--(\ref{c222}) of the ideal $I$ belong to $T^{(4)}$. Hence, $I \subseteq T^{(4)}$.

Now we prove that $T^{(4)} \subseteq I$. Since the ideal $T^{(4)}$ is generated by the polynomials $[a_1, a_2, a_3, a_4]$ $( a_i \in K \langle Y \rangle )$, it suffices to check that, for all $a_i \in K \langle Y \rangle $, the polynomial
\begin{equation}\label{4inI}
[a_1, a_2, a_3, a_4]
\end{equation}
belongs to $I$. Clearly, one can assume that all $a_i$ are monomials.

In order to prove that each polynomial of the form (\ref{4inI}) belongs to $I$ we will prove that, for all monomials $a_i \in K \langle Y \rangle$, the following polynomials belong to $I$ as well:
\begin{gather}\label{222inI}
\bigl( [a_1, a_2][a_3, a_4] + [a_1, a_3][a_2, a_4] \bigr) [a_5, a_6];
\\
\label{33inI} [a_1,a_2,a_3] [a_4,a_5,a_6] ;
\\
\label{32inI-1} [a_1, a_2, a_3] [a_4, a_5] + [a_1, a_2, a_4] [a_3, a_5] ;
\\
\label{32inI-2} [a_1, a_2, a_3] [a_4, a_5] + [a_1, a_4, a_3] [a_2, a_5] .
\end{gather}
The proof is by induction on the degree $m = \deg f$ of a polynomial $f$ that is of one of the forms (\ref{4inI})--(\ref{32inI-2}). It is clear that $m \ge 4$. If $m = 4$ then $f$ is of the form (\ref{4inI}) with all monomials $a_i$ of degree $1$. Hence, $f$ is of the form (\ref{c4}) so $f \in I$. This establishes the base of the induction.

To prove the induction step suppose that $m = \deg f >4$ and that all polynomials of the forms (\ref{4inI})--(\ref{32inI-2}) of degree less than $m$ belong to $I$.

\textbf{Case 1.} Suppose that $f$ is of the form (\ref{222inI}),
\[
f = f(a_1, \dots , a_6) = \bigl( [a_1, a_2][a_3, a_4] + [a_1, a_3][a_2, a_4] \bigr) [a_5, a_6].
\]
If $\mbox{deg } f = 6$ then $f$ is of the form (\ref{c222}) so $f \in I$. If $\mbox{deg }f > 6$ then, for some $i$, $1 \le i \le 6$, we have $a_i = a'_i a''_i$ where $\mbox{deg } a'_i, \mbox{deg } a''_i < \mbox{deg } a_i$. We claim that to check that $f \in I$ one may assume without loss of generality that $i = 1$ or $i = 6$.

Indeed, we have $f(a_1, \dots , a_4, a_5, a_6) = - f(a_1, \dots , a_4, a_6, a_5)$. Further, by the induction hypothesis, $\bigl[ [a_i, a_j], [a_k, a_l] \bigr] \in I$ for all distinct $i,j,k,l$, $1 \le i,j,k,l \le 6$. It follows that
\begin{multline*}
f(a_1, a_2, a_3, a_4, a_5, a_6 ) \equiv f(a_2, a_1, a_4, a_3, a_5, a_6 )
\\
\equiv f(a_3, a_1, a_4, a_2, a_5, a_6) \equiv f(a_4, a_2, a_3, a_1, a_5, a_6) \pmod{I} .
\end{multline*}
The claim follows.

Suppose that $a_6 = a'_6 a''_6$. We have
\begin{align*}
f = & \bigl( [a_1, a_2][a_3, a_4] + [a_1, a_3][a_2, a_4] \bigr) [a_5, a'_6 a''_6]
\\
= & \bigl( [a_1, a_2][a_3, a_4] + [a_1, a_3][a_2, a_4] \bigr) \bigl( a'_6 [a_5, a''_6]
+ [a_5, a'_6] a''_6 \bigr)
\\
= & \bigl( [a_1, a_2][a_3, a_4] + [a_1, a_3][a_2, a_4] \bigr) \bigl( [a_5, a''_6] a'_6
- [a_5, a''_6, a'_6] + [a_5, a'_6] a''_6 \bigr) .
\end{align*}
By the induction hypothesis, $\bigl( [a_1,a_2][a_3,a_4] + [a_1,a_3] [a_2, a_4] \bigr) [a_5, b] \in I$ where $b \in \{ a'_6, a''_6 \}$. It follows that
\[
\bigl( [a_1,a_2][a_3,a_4] + [a_1,a_3][a_2,a_4] \bigr) \bigl( [a_5, a''_6] a'_6  + [a_5, a'_6] a''_6 \bigr) \in I.
\]
On the other hand, by the induction hypothesis, we have
\begin{align*}
& [a_3,a_4] [a_5, a''_6, a'_6] + [a'_6,a_4] [a_5, a''_6, a_3] \in I,
\\
& [a_1,a_2] [a_5, a''_6, a_3] + [a_1,a_3] [a_5, a''_6, a_2] \in I,
\\
& [a'_6,a_4] [a_5, a''_6, a_2] + [a_2,a_4] [a_5, a''_6, a'_6] \in I
\end{align*}
so
\begin{align*}
& \bigl( [a_1,a_2][a_3,a_4] + [a_1,a_3][a_2,a_4] \bigr) [a_5, a''_6, a'_6]
\\
= & [a_1,a_2]\bigl( [a_3,a_4] [a_5, a''_6, a'_6] + [a'_6,a_4] [a_5, a''_6, a_3] \bigr)
\\
- & [a'_6, a_4] \bigl( [a_1,a_2] [a_5, a''_6, a_3] + [a_1,a_3] [a_5, a''_6, a_2] \bigr)
\\
+ & [a_1,a_3] \bigl( [a'_6,a_4] [a_5, a''_6, a_2] + [a_2,a_4] [a_5, a''_6, a'_6] \bigr) \in I .
\end{align*}
It follows that $f \in I$ if $a_6 = a'_6 a''_6$.

Suppose that $a_1 = a'_1 a''_1$. We have
\begin{align*}
f = & \bigl( [a'_1 a''_1, a_2][a_3, a_4] + [a'_1 a''_1, a_3][a_2, a_4] \bigr) [a_5, a_6]
\\
= & \Big( a'_1 \bigl( [a''_1, a_2][a_3, a_4] + [a''_1, a_3][a_2, a_4] \bigr) + [a'_1, a_2] a''_1 [a_3, a_4] + [a'_1, a_3] a''_1 [a_2, a_4] \Big)
\\
\times & [a_5, a_6] = \Big( a'_1 \bigl( [a''_1, a_2][a_3, a_4] + [a''_1, a_3][a_2, a_4] \bigr) + a''_1 \bigl( [a'_1, a_2][a_3, a_4]
\\
+ & [a'_1, a_3][a_2, a_4] \bigr) +  [a'_1, a_2, a''_1][a_3, a_4] + [a'_1, a_3, a''_1][a_2, a_4] \Big) [a_5, a_6].
\end{align*}
By the induction hypothesis, $([b, a_2][a_3, a_4] + [b, a_3][a_2, a_4]) [a_5, a_6] \in I$ where $b \in \{ a'_1, a''_1 \}$. It follows that
\begin{multline*}
\Big( a'_1 \bigl( [a''_1, a_2][a_3, a_4] + [a''_1, a_3][a_2, a_4] \bigr)
\\
+ a''_1 \bigl( [a'_1, a_2][a_3, a_4] + [a'_1, a_3][a_2, a_4] \bigr) \Big) [a_5, a_6] \in I.
\end{multline*}
On the other hand, by the induction hypothesis,
\[
[a'_1, a_2, a''_1][a_3, a_4] + [a'_1, a_3, a''_1][a_2, a_4] \in I
\]
so
\[
\bigl( [a'_1, a_2, a''_1][a_3, a_4] + [a'_1, a_3, a''_1][a_2, a_4] \bigr) [a_5, a_6] \in I.
\]
It follows that $f \in I$ if $ a_1 = a'_1 a''_1$.

Thus, if $f$ is a polynomial of the form (\ref{222inI}) of degree $m$ then $f \in I$.

\textbf{Case 2.} Suppose that $f$ is of the form (\ref{33inI}),
\[
f = f(a_1, \dots , a_6) = [a_1, a_2, a_3] [a_4, a_5, a_6] .
\]
If $\mbox{deg }f = 6$ then $f$ is of the form (\ref{c33}) so $f \in I$. If $\mbox{deg }f > 6$ then, for some $i$, $1 \le i \le 6$, we have $a_i = a'_i a''_i$ where $\mbox{deg } a'_i, \mbox{deg } a''_i < \mbox{deg } a_i$. We claim that to check that $f \in I$ one may assume without loss of generality that $i=1$ or $i = 3$.

Indeed, $f(a_1, a_2, a_3, \dots ) = - f (a_2, a_1, a_3, \dots )$. Further, by the induction hypothesis, $[a_1, a_2, a_3, a_i] \in I$ for $i= 4, 5, 6$; it follows that $[a_1, a_2, a_3]$ commutes with $[a_4,a_5,a_6]$ modulo $I$ so
\[
f(a_1, a_2, a_3, a_4, a_5, a_6) \equiv f(a_4,a_5,a_6,a_1,a_2,a_3) \pmod{I}.
\]
The claim follows.

Suppose that $a_3 = a'_3 a''_3$. Then
\begin{align*}
f = & [a_1, a_2, a'_3 a''_3] [a_4, a_5, a_6] =  a'_3 [a_1, a_2, a''_3] [a_4, a_5, a_6] + [a_1, a_2, a'_3 ] a''_3 [a_4, a_5, a_6]
\\
= & a'_3 [a_1, a_2, a''_3] [a_4, a_5, a_6] +  a''_3 [a_1, a_2, a'_3 ] [a_4, a_5, a_6] + [a_1, a_2, a'_3 , a''_3] [a_4, a_5, a_6].
\end{align*}
%
%
By the induction hypothesis, $[a_1, a_2, b] [a_4, a_5, a_6] \in I$
where $b \in \{ a'_3, a''_3 \}$ and $[a_1, a_2, a'_3 , a''_3] \in
I$ so in this case $f \in I$, as required.

Suppose that $a_1 = a'_1 a''_1$. We have
\begin{align*}
f = & [a'_1 a''_1, a_2, a_3] [a_4, a_5, a_6] = \bigl[ \bigl( a'_1 [a''_1, a_2] + [a'_1, a_2] a''_1 \bigr) , a_3\bigr]  [a_4, a_5, a_6]
\\
= & \bigl( a'_1 [a''_1, a_2, a_3] + [a_1', a_3] [a''_1, a_2] + [a'_1 , a_2]
[a''_1, a_3] + [a'_1, a_2, a_3] a''_1 \bigr) [a_4, a_5, a_6] \\
= & \bigl( a'_1 [a''_1, a_2, a_3] + [a'_1, a_3] [a''_1, a_2] + [a'_1 ,
a_2] [a''_1, a_3] + a''_1 [a'_1, a_2, a_3]
\\
+ & [a'_1, a_2, a_3, a''_1] \bigr) [a_4, a_5, a_6].
\end{align*}
By the induction hypothesis, $[b, a_2, a_3] [a_4, a_5, a_6] \in I$
where $b \in \{ a'_1, a''_1 \}$ and $[a'_1, a_2, a_3, a''_1] \in
I$. It follows that
\[
\bigl( a'_1 [a''_1, a_2, a_3] + a''_1 [a'_1, a_2, a_3] + [a'_1, a_2,
a_3, a''_1] \bigr) [a_4, a_5, a_6] \in I.
\]
On the other hand, by the induction hypothesis,
\begin{align*}
& [a''_1, a_2] [a_4, a_5, a_6] + [a_6, a_2] [a_4, a_5, a''_1] \in I ,
\\
& [a'_1, a_3] [a_4, a_5, a''_1] + [a''_1, a_3] [a_4, a_5, a'_1] \in I,
\\
& [a_6, a_2] [a_4, a_5, a'_1] + [a'_1, a_2] [a_4, a_5, a_6] \in I
\end{align*}
so
\begin{align*}
& \bigl( [a'_1, a_3] [a''_1, a_2] + [a'_1, a_2] [a''_1, a_3] \bigr) [a_4, a_5, a_6]
\\
= & [a'_1, a_3] \bigl( [a''_1, a_2] [a_4, a_5, a_6] + [a_6, a_2] [a_4, a_5, a''_1] \bigr)
\\
- & [a_6, a_2] \bigl( [a'_1, a_3] [a_4, a_5, a''_1] + [a''_1, a_3] [a_4, a_5, a'_1] \bigr)
\\
+ & [a''_1, a_3] \bigl( [a_6, a_2] [a_4, a_5, a'_1] + [a'_1, a_2] [a_4, a_5, a_6] \bigr) \in I .
\end{align*}
It follows that $f \in I$ if $a_1 = a'_1 a''_1$.

Thus, if $f$ is a polynomial of the form (\ref{33inI}) of degree $m$ then $f \in I$.

\textbf{Case 3.} Suppose that $f$ is of the form (\ref{32inI-1}). If $\mbox{deg
}f = 5$ then $f$ is of the form (\ref{c32-1}) so $f \in I$. If
$\mbox{deg }f > 5$ then, for some $i$, $1 \le i \le 5$, we have
$a_i = a'_i a''_i$ where $\mbox{deg } a'_i, \mbox{deg } a''_i <
\mbox{deg } a_i$.

Suppose first that $a_5 = a'_5 a''_5$. Then
%
%
\begin{align*}
f = & [a_1, a_2, a_3] [a_4, a'_5 a''_5] + [a_1, a_2, a_4] [a_3, a'_5 a''_5]
\\
=  & [a_1, a_2, a_3] a'_5 [a_4, a''_5]  +  [a_1, a_2, a_4] a'_5 [a_3, a''_5]
\\
+ & \bigl( [a_1, a_2, a_3] [a_4, a'_5] +  [a_1, a_2, a_4] [a_3, a'_5] \bigr) a''_5
\\
= & a'_5 \bigl( [a_1, a_2, a_3] [a_4, a''_5]  +  [a_1, a_2, a_4] [a_3, a''_5] \bigr)
\\
+ & \bigl( [a_1, a_2, a_3] [a_4, a'_5] +  [a_1, a_2, a_4] [a_3, a'_5] \bigr) a''_5
\\
+ & [a_1, a_2, a_3, a'_5] [a_4, a''_5]  +  [a_1, a_2, a_4, a'_5] [a_3, a''_5]
\end{align*}
%
%
so in this case $f \in I$ by the induction hypothesis.

Suppose that $a_4 = a'_4 a''_4$. Then
\begin{align*}
f = & [a_1, a_2, a_3] [a'_4 a''_4, a_5] + [a_1, a_2, a'_4 a''_4] [a_3, a_5]
\\
= & [a_1, a_2, a_3] a'_4 [a''_4, a_5] + a'_4 [a_1, a_2,  a''_4] [a_3, a_5]
\\
+ & [a_1, a_2, a_3] [a'_4 , a_5] a''_4 + [a_1, a_2, a'_4 ] a''_4 [a_3, a_5]
\\
= & a_4' \bigl( [a_1, a_2, a_3] [a''_4, a_5] + [a_1, a_2,  a''_4] [a_3, a_5] \bigr) +  [a_1, a_2, a_3, a'_4] [a''_4, a_5]
\\
+ & \bigl( [a_1, a_2, a_3] [a'_4 , a_5] + [a_1, a_2, a'_4 ]  [a_3, a_5] \bigr) a''_4 - [a_1, a_2, a'_4 ]  [a_3, a_5, a''_4].
\end{align*}
%
%
Since each polynomial of the form (\ref{33inI}) of degree $m$ belongs to $I$, we have $[a_1, a_2, a'_4 ]  [a_3, a_5, a''_4] \in I$. On the other hand, by the induction hypothesis,
\begin{align*}
& [a_1, a_2, a_3] [a''_4, a_5] + [a_1, a_2,  a''_4] [a_3, a_5]  \in I ,
\\
& [a_1, a_2, a_3] [a'_4 , a_5] + [a_1, a_2, a'_4 ]  [a_3, a_5] \in I
\end{align*}
and $[a_1, a_2, a_3, a'_4] \in I$. It follows that in this case $f \in I$. Similarly, $f \in I$ if $a_3 = a'_3 a''_3$.

Suppose that $a_1 = a_1' a_1''$. Then
\begin{align*}
f = & [a_1' a_1'', a_2, a_3][a_4, a_5] + [a_1' a_1'', a_2, a_4][a_3, a_5]
\\
= & \bigl[ \bigl( a_1' [a_1'', a_2] + [a_1' , a_2] a_1'' \bigr) , a_3\bigr] [a_4, a_5] + \bigl[ \bigl( a_1' [a_1'', a_2] + [a_1' , a_2] a_1'' \bigr) , a_4 \bigr] [a_3, a_5]
\\
= & a_1' [a_1'', a_2, a_3] [a_4, a_5] + [a_1', a_3] [a_1'', a_2][a_4, a_5] + [a_1', a_2] [a_1'', a_3][a_4, a_5]
\\
+ & [a_1', a_2, a_3] a_1'' [a_4, a_5] + a_1' [a_1'', a_2, a_4] [a_3, a_5] + [a_1', a_4] [a_1'', a_2][a_3, a_5]
\\
+ & [a_1' , a_2] [a_1'', a_4][a_3, a_5]  + [a_1' , a_2, a_4] a_1'' [a_3, a_5]
\\
= & a_1' \bigl( [a_1'', a_2, a_3] [a_4, a_5] + [a_1'', a_2, a_4] [a_3, a_5] \bigr)
\\
+ & a_1'' \bigl( [a_1', a_2, a_3]  [a_4, a_5] + [a_1' , a_2, a_4] [a_3, a_5] \bigr) + [a_1', a_2, a_3, a_1''] [a_4, a_5]
\\
+ & [a_1' , a_2, a_4, a_1''] [a_3, a_5] - \bigl( [a_1', a_3] [a_2, a_1''] + [a_1', a_2] [a_3, a_1'']\bigr) [a_4, a_5]
\\
- & \bigl( [a_1', a_4] [a_2, a_1''] + [a_1' , a_2] [a_4, a_1''] \bigr) [a_3, a_5] .
\end{align*}
Note that
\begin{align*}
& \bigl( [a_1', a_3] [a_2, a_1''] + [a_1', a_2] [a_3, a_1'']\bigr) [a_4, a_5] \in I ,
\\
& \bigl( [a_1', a_4] [a_2, a_1''] + [a_1' , a_2] [a_4, a_1''] \bigr) [a_3, a_5] \in I
\end{align*}
because the polynomials of the form (\ref{222inI}) of degree $m$ belong to $I$. On the other hand, by the induction hypothesis,
\begin{align*}
& [a_1'', a_2, a_3] [a_4, a_5] + [a_1'', a_2, a_4] [a_3, a_5] \in I,
\\
& [a_1', a_2, a_3]  [a_4, a_5] + [a_1' , a_2, a_4] [a_3, a_5] \in I
\end{align*}
and $[a_1' , a_2, a_4, a_1''] \in I$ so in this case $f \in I$. Similarly, $f \in I$ if $a_2 = a'_2 a''_2$.

Thus, if $f$ is a polynomial of the form (\ref{32inI-1}) of degree $m$ then $f \in I$.

%
%

\textbf{Case 4.} Suppose that $f$ is of the form (\ref{32inI-2}). If $\mbox{deg }f = 5$ then $f$ is of the form (\ref{c32-2}) so $f \in I$. If $\mbox{deg }f > 5$ then, for some $i$, $1 \le i \le 5$, we have $a_i = a'_i a''_i$ where $\mbox{deg } a'_i, \mbox{deg } a''_i < \mbox{deg } a_i$.

Suppose first that $a_5 = a'_5 a''_5$. Then
\begin{align*}
f = & [a_1, a_2, a_3] [a_4, a_5' a_5''] + [a_1, a_4, a_3] [a_2, a_5' a_5'']
\\
= & [a_1, a_2, a_3] a_5' [a_4, a_5''] + [a_1, a_2, a_3] [a_4, a_5'] a_5''
\\
+ & [a_1, a_4, a_3] a_5' [a_2,  a_5''] +  [a_1, a_4, a_3] [a_2, a_5'] a_5''
\\
= & a_5' \bigl( [a_1, a_2, a_3] [a_4, a_5''] +  [a_1, a_4, a_3] [a_2,  a_5''] \bigr) + \bigl( [a_1, a_2, a_3] [a_4, a_5']
\\
+  & [a_1, a_4, a_3] [a_2, a_5'] \bigr) a_5'' + [a_1, a_2, a_3, a_5'] [a_4, a_5''] + [a_1, a_4, a_3, a_5'] [a_2,  a_5'']
\end{align*}
so in this case $f \in I$ by the induction hypothesis.

Suppose that $a_4 = a'_4 a''_4$. Then
\begin{align*}
f = & [a_1, a_2, a_3] [a_4' a_4'', a_5] + [a_1, a_4' a_4'', a_3] [a_2, a_5]
\\
= & [a_1, a_2, a_3] a_4' [a_4'', a_5] + [a_1, a_2, a_3] [a_4' , a_5] a_4''
\\
+ & \bigl[ \bigl( [a_1, a_4'] a_4'' + a_4' [a_1,  a_4''] \bigr) , a_3 \bigr] [a_2, a_5]
\\
= & [a_1, a_2, a_3] a_4' [a_4'', a_5] + [a_1, a_2, a_3] [a_4' , a_5] a_4'' + [a_1, a_4'][a_4'', a_3][a_2, a_5]
\\
+ & [a_1, a_4', a_3]a_4'' [a_2, a_5] + a_4' [a_1,  a_4'', a_3] [a_2, a_5] + [a_4', a_3][a_1, a_4''] [a_2, a_5]
\\
= & a_4' \bigl( [a_1, a_2, a_3] [a_4'', a_5] + [a_1,  a_4'', a_3] [a_2, a_5] \bigr) + [a_1, a_2, a_3, a_4'] [a_4'', a_5]
\\
+ & \bigl( [a_1, a_2, a_3] [a_4' , a_5]  + [a_1, a_4', a_3] [a_2, a_5] \bigr) a_4'' - [a_1, a_4', a_3] [a_2, a_5, a_4'']
\\
+ & ([a_1, a_4'][a_4'', a_3] + [a_1, a_4''] [a_4', a_3]) [a_2, a_5] + \bigl[ a_4', a_3,[a_1, a_4''] \bigr] [a_2, a_5].
\end{align*}
It follows that in this case $f \in I$ by the induction hypothesis and because the polynomials of the forms  (\ref{222inI})--(\ref{33inI}) of degree $m$ belong to $I$. Similarly, $f \in I$ if $a_2 = a'_2 a''_2$.

Suppose that $a_3 = a'_3 a''_3$. We have
\begin{align*}
f = & [a_1, a_2, a_3'a_3''] [a_4, a_5] + [a_1, a_4, a_3'a_3''] [a_2, a_5]
\\
= & a_3' [a_1, a_2, a_3''] [a_4, a_5] + [a_1, a_2, a_3'] a_3'' [a_4, a_5]
\\
+ & a_3' [a_1, a_4, a_3''] [a_2, a_5] + [a_1, a_4, a_3'] a_3'' [a_2, a_5]
\\
= & a_3' \bigl( [a_1, a_2, a_3''] [a_4, a_5] + [a_1, a_4, a_3''] [a_2, a_5] \bigr)
\\
+ & \bigl( [a_1, a_2, a_3'] [a_4, a_5] +  [a_1, a_4, a_3']  [a_2, a_5] \bigr) a_3''
\\
- & [a_1, a_2, a_3']  [a_4, a_5, a_3''] - [a_1, a_4, a_3']  [a_2, a_5, a_3'']
\end{align*}
so in this case $f \in I$ as above.

Suppose that $a_1 = a'_1 a''_1$. Then
\begin{align*}
f = & [a_1'a_1'', a_2, a_3] [a_4, a_5] + [a_1'a_1'', a_4, a_3] [a_2, a_5]
\\
= & \bigl[ \bigl( a_1'[a_1'', a_2] + [a_1', a_2] a_1'' \bigr) , a_3 \bigr] [a_4, a_5] + \bigl[ \bigl( a_1'[a_1'', 
\\
= & a_1'[a_1'', a_2, a_3][a_4, a_5] + [a_1', a_3][a_1'', a_2][a_4, a_5] + [a_1', a_2][a_1'', a_3][a_4, a_5]
\\
+ & [a_1', a_2, a_3] a_1'' [a_4, a_5] + a_1'[a_1'', a_4, a_3][a_2, a_5] + [a_1', a_3] [a_1'', a_4][a_2, a_5]
\\
+ & [a_1', a_4][a_1'',a_3][a_2, a_5] + [a_1', a_4, a_3] a_1'' [a_2,a_5]
\\
= & a_1' \bigl( [a_1'', a_2, a_3][a_4, a_5] + [a_1'', a_4, a_3][a_2, a_5] \bigr) +  a_1'' \bigl( [a_1', a_2, a_3]  [a_4, a_5]
\\
+ & [a_1', a_4, a_3] [a_2,a_5] \bigr) + [a_1', a_2, a_3, a_1''] [a_4, a_5] + [a_1', a_4, a_3, a_1''] [a_2,a_5]
\\
+ & [a_1', a_3]([a_1'', a_2][a_4, a_5] + [a_1'', a_4][a_2, a_5]) + \bigl( [a_1', a_2][a_4, a_5] + [a_1', a_4][a_2, a_5] \bigr)
\\
\times & [a_1'', a_3] + [a_1', a_2]\bigl[ a_1'', a_3, [a_4, a_5] \bigr] +  [a_1', a_4] \bigl[ a_1'',a_3, [a_2, a_5] \bigr]
\end{align*}
so in this case $f \in I$ by the induction hypothesis and because the polynomials of the form (\ref{222inI}) of 

Thus, if $f$ is a polynomial of the form (\ref{32inI-2}) of degree $m$ then $f \in I$.

\textbf{Case 5.} Finally, suppose that $f$ is of the form (\ref{4inI}). Since $\mbox{deg }f > 4$, we have $a_i = a'_i a''_i$ for some $i$, $1 \le i \le 4$ where $\mbox{deg } a'_i, \mbox{deg } a''_i < \mbox{deg } a_i$.

Suppose that $a_4 = a'_4 a''_4$. Then
\[
f = [a_1, a_2, a_3, a'_4 a''_4] = a'_4 [a_1, a_2, a_3, a''_4] + [a_1,
a_2, a_3, a'_4 ] a''_4
\]
so, by the induction hypothesis, $f \in I$.

Now suppose that $a_3 = a'_3 a''_3$. Then
\begin{align*}
f= & [a_1, a_2, a'_3 a''_3, a_4] = [a'_3 [a_1, a_2, a''_3] + [a_1, a_2, a'_3] a''_3 , a_4]
\\
= & a'_3 [a_1, a_2, a''_3, a_4] + [a'_3, a_4] [a_1, a_2, a''_3] + [a_1, a_2, a'_3] [a''_3 , a_4] + [a_1, a_2, a'_3, a_4] a''_3
\\
= & a'_3 [a_1, a_2, a''_3, a_4] + [a_1, a_2, a''_3][a'_3, a_4]  + [a_1, a_2, a'_3] [a''_3 , a_4]
\\
+ & [a_1, a_2, a'_3, a_4] a''_3 + \bigl[ a'_3, a_4, [a_1, a_2, a''_3] \bigr] .
\end{align*}
By the induction hypothesis, $[a_1, a_2, b, a_4] \in I$ if $b \in \{ a'_3, a''_3 \}$ and
\begin{multline*}
\bigl[ a'_3, a_4, [a_1, a_2, a''_3] \bigr] = - \bigl[ [a_1, a_2, a''_3], [a'_3,
a_4] \bigr]
\\
= - [a_1, a_2, a''_3, a'_3, a_4] +  [a_1, a_2, a''_3, a_4, a'_3] \in I.
\end{multline*}
On the other hand,  $[a_1, a_2, a''_3][a'_3, a_4]  + [a_1, a_2, a'_3] [a''_3 , a_4] \in I$ because each polynomial of the form (\ref{32inI-1}) of degree $m$ belongs to $I$. Hence, in this case $f \in I$.

Finally suppose that either $a_1 = a_1' a_1''$ or $a_2 = a'_2 a''_2$. It is clear that without loss of generality we may assume $a_2 = a'_2 a''_2$. Then
\begin{align*}
f = & [a_1, a'_2 a''_2, a_3, a_4] = \bigl[ \bigl( a'_2 [a_1, a''_2] + [a_1, a'_2]
a''_2 \bigr) , a_3, a_4 \bigr]
\\
= & \bigl[ \bigl( a'_2 [a_1, a''_2, a_3] + [a'_2, a_3] [a_1, a''_2] + [a_1, a'_2]
[a''_2, a_3] + [a_1, a'_2, a_3] a''_2 \bigr) , a_4 \bigr]
\\
= & a'_2 [a_1, a''_2, a_3, a_4] + [a'_2, a_4] [a_1, a''_2, a_3] +
[a'_2, a_3] [a_1, a''_2, a_4]
\\
+ & [a'_2, a_3, a_4] [a_1, a''_2] + [a_1, a'_2] [a''_2, a_3, a_4] +
[a_1, a'_2, a_4] [a''_2, a_3]
\\
+ & [a_1, a'_2, a_3] [a''_2, a_4] + [a_1, a'_2, a_3, a_4] a''_2.
\end{align*}
We have $ a'_2 [a_1, a''_2, a_3, a_4], [a_1, a'_2, a_3, a_4] a''_2 \in I $ by the induction hypothesis and
\[
[a_1, a'_2, a_4] [a''_2, a_3] + [a_1, a'_2, a_3] [a''_2, a_4] = -
\bigl( [a_1, a'_2, a_4] [a_3, a''_2] + [a_1, a'_2, a_3] [a_4, a''_2] \bigr)
\in I
\]
because each polynomial of the form (\ref{32inI-1}) of degree $m$ belong to $I$. Further,
\begin{align*}
& [a'_2, a_4] [a_1, a''_2, a_3] + [a'_2, a_3] [a_1, a''_2, a_4]
\\
= & [a_1, a''_2, a_3] [a'_2, a_4] +  [a_1, a''_2, a_4] [a'_2, a_3] -
\bigl[ [a_1, a''_2, a_3], [a'_2, a_4] \bigr] -  \bigl[ [a_1, a''_2, a_4], [a'_2,
a_3] \bigr]
\end{align*}
where $[a_1, a''_2, a_3] [a'_2, a_4] +  [a_1, a''_2, a_4] [a'_2,
a_3] \in I$ as above,
\[
\bigl[ [a_1, a''_2, a_3], [a'_2, a_4] \bigr] = [a_1, a''_2, a_3, a'_2, a_4] -
[a_1, a''_2, a_3, a_4, a'_2] \in I
\]
by the induction hypothesis and, similarly, $\bigl[ [a_1, a''_2, a_4],
[a'_2, a_3] \bigr] \in I$. Finally,
\begin{align*}
& [a'_2, a_3, a_4] [a_1, a''_2] + [a_1, a'_2] [a''_2, a_3, a_4]
\\
= & [a_3, a'_2, a_4] [a''_2, a_1] + [a_3, a''_2, a_4] [a'_2, a_1] -
\bigl[ [a_3, a''_2, a_4], [a'_2, a_1] \bigr]
\end{align*}
where $[a_3, a'_2, a_4] [a''_2, a_1] + [a_3, a''_2, a_4] [a'_2,
a_1] \in I$ and $\bigl[ [a_3, a''_2, a_4], [a'_2,
a_1] \bigr] \in I$ as above. It follows that $f =[a_1, a'_2 a''_2, a_3,
a_4] \in I$, as required.

Thus, $[a_1, a_2, a_3, a_4] \in I$ for all monomials $a_i \in K \langle X \rangle$ such that $[a_1, a_2, a_3, a_4]$ is of degree $m$. This establishes the induction step and proves that $T^{(4)} \subseteq I$.

The proof of Theorem \ref{maintheorem3} is completed.

\medskip
Now we prove Corollary \ref{corollarytomaintheorem3}.

Let $I$ be the two-sided ideal of $K \langle Y \rangle$ generated by the polynomials (\ref{c4}), (\ref{c33}), (\ref{c32sigma}) and (\ref{c222sigma}). We will prove that $I = T^{(4)}$. Since the ideal $T^{(4)}$ is generated by the polynomials (\ref{c4}), (\ref{c33}), (\ref{c32-1}), (\ref{c32-2}) and (\ref{c222}) where polynomials (\ref{c32-1}) and (\ref{c32-2}) are of the form (\ref{c32sigma}) and polynomials (\ref{c222}) are of the form (\ref{c222sigma}), we have $T^{(4)} \subseteq I$. To prove that $I \subseteq T^{(4)}$ it suffices to check that all polynomials (\ref{c32sigma}) and (\ref{c222sigma}) belong to $T^{(4)}$.

The following lemma is well-known (see, for instance, \cite{EKM09}, \cite[Lemma 1]{Gordienko07}, \cite[Lemma 2.3]{Kras12}\cite[Lemma 1]{Volichenko78}).

\begin{lemma}\label{alternating}
For all $a_1, \dots ,a_5 \in \mathbb Z \langle X \rangle$ and all $\sigma \in S_5$, we have
\begin{equation}\label{permutation2}
[a_{1}, a_{2}] [a_{3}, a_{4}, a_{5}] \equiv  \mbox{sgn} (\sigma )
[a_{\sigma (1)}, a_{\sigma (2)}] [a_{\sigma (3)}, a_{\sigma (4)},
a_{\sigma (5)}] \pmod{T^{(4)}} .
\end{equation}
\end{lemma}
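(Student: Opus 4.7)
The plan is to reduce the lemma to a finite check over a generating set of transpositions and to rely on the identities already established in the paper. Since $\mathrm{sgn}\colon S_5 \to \{\pm 1\}$ is a homomorphism, it suffices to prove the congruence
\[
[a_1, a_2][a_3, a_4, a_5] \equiv -[a_{\tau(1)}, a_{\tau(2)}][a_{\tau(3)}, a_{\tau(4)}, a_{\tau(5)}] \pmod{T^{(4)}}
\]
for each transposition $\tau$ in a set that generates $S_5$. Writing an arbitrary $\sigma \in S_5$ as a product of such transpositions and tracking the cumulative sign then yields (\ref{permutation2}) for every $\sigma$.

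I would use the four transpositions $\tau_1 = (1,2)$, $\tau_2 = (3,4)$, $\tau_3 = (2,4)$, $\tau_4 = (2,5)$. Their associated graph on the vertex set $\{1, 2, 3, 4, 5\}$ has the edges $\{1,2\}$, $\{3,4\}$, $\{2,4\}$, $\{2,5\}$, and is connected (one may travel from any vertex to any other via the paths $1 - 2 - 4 - 3$ and $2 - 5$). By the standard fact that a set of transpositions whose associated graph is connected generates the full symmetric group, these four transpositions generate $S_5$.

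The verifications themselves are short. The case $\tau_1$ is immediate from $[a_1, a_2] = -[a_2, a_1]$, and $\tau_2$ follows from $[a_3, a_4, a_5] = [[a_3, a_4], a_5] = -[[a_4, a_3], a_5] = -[a_4, a_3, a_5]$. The case $\tau_3$ is the statement $[a_1,a_2][a_3,a_4,a_5] + [a_1,a_4][a_3,a_2,a_5] \in T^{(4)}$, which is precisely a polynomial of the form (\ref{c32-2}) and therefore belongs to $T^{(4)}$ by Theorem \ref{maintheorem3}. Similarly, the case $\tau_4$ is the statement that a polynomial of the form (\ref{c32-1}) lies in $T^{(4)}$.

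There is no substantial obstacle here: the content of the lemma is packaged in (\ref{32+32T4-1})--(\ref{32+32T4-2}) (equivalently (\ref{c32-1})--(\ref{c32-2})) together with antisymmetry of the commutator. The only bookkeeping is choosing a set of transpositions whose anti-symmetry reads off directly from the already-known identities, and then observing that this set generates $S_5$.
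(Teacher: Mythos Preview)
Your proposal is correct and follows essentially the same approach as the paper's own proof: the paper also verifies the congruence for the transpositions $(12)$, $(34)$, $(25)$, $(24)$ (the first two from antisymmetry, the last two via Theorem~\ref{maintheorem3}) and then observes that these generate $S_5$. The only cosmetic difference is that you justify the generation claim via the connected-graph criterion, whereas the paper simply asserts it.
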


\begin{proof}
It is clear that the congruence (\ref{permutation2}) holds if $\sigma = (12)$ or $\sigma = (34)$. If $\sigma = (25)$ or $\sigma = (24)$ then, by Theorem \ref{maintheorem3}, the congruence (\ref{permutation2}) holds as well. Since the transpositions $(12)$, $(34)$, $(25)$ and $(24)$ generate the entire group $S_5$ of the permutations of the set $\{ 1,2,3,4,5 \}$, the result follows.
\end{proof}

It follows from Lemma \ref{alternating} that the polynomials (\ref{c32sigma}) belong to $T^{(4)}$. One can prove in a similar way that the polynomials (\ref{c222sigma}) belong to $T^{(4)}$ as well. Thus, $I \subseteq T^{(4)}$, as required.

The proof of Corollary \ref{corollarytomaintheorem3} is completed.


\begin{thebibliography}{99}

\bibitem{AS01}
Bernhard Amberg, Yaroslav Sysak, \textit{Associative rings whose adjoint semigroup is locally nilpotent}, Archiv der Mathematik (Basel) \textbf{76} (2001), 426--435.

\bibitem{AJ10}
Noah Arbesfeld, David Jordan, \textit{New results on the lower central series quotients of a free associative algebra}, Journal of Algebra \textbf{323} (2010), 1813--1825. arXiv:0902.4899

\bibitem{BB11}
Martina Balagovi\'c, Anirudha Balasubramanian,  \textit{On the lower central series quotients of a graded associative algebra}, Journal of Algebra \textbf{328} (2011), 287--300.

\bibitem{BJ10}
Asilata Bapat, David Jordan, \textit{Lower central series of free algebras in symmetric tensor categories}, Journal of Algebra \textbf{373} (2013), 299--311. arXiv:1001.1375

\bibitem{BEJKL12}
Surya Bhupatiraju, Pavel Etingof, David Jordan, William Kuszmaul and Jason Li, \textit{Lower central series of a free associative algebra over the integers and finite fields}, Journal of Algebra \textbf{372} (2012), 251--274. arXiv:1203.1893

\bibitem{CK13}
Eudes Antonio da Costa, Alexei Krasilnikov, \textit{Relations in universal Lie nilpotent associative algebras of class $4$}, arXiv:1306.4294

\bibitem{CFZ13}
Katherine Cordwell, Teng Fei, Kathleen Zhou, \textit{On lower central ceries quotients of finitely generated algebras over $\mathbb Z$}, arXiv:1309.1237

\bibitem{DE08}
Galyna Dobrovolska, Pavel Etingof, \textit{An upper bound for the lower central series quotients of a free associative algebra}, International Mathematics Research Notices \textbf{2008}, no. 12, Art. ID rnn039, 10 pp.  arXiv:0801.1997

\bibitem{DKM08}
Galyna Dobrovolska, John Kim, Xiaoguang Ma, \textit{On the lower central series of an associative algebra (with an appendix by Pavel Etingof)}, Journal of Algebra \textbf{320} (2008), 213–237.  arXiv:0709.1905

\bibitem{Drenskybook}
V. Drensky, \textit{Free algebras and PI-algebras}. Graduate course in  algebra, Springer, Singapore, 1999.

\bibitem{Drensky84}
V. Drensky, \textit{Codimensions of $T$-ideals and Hilbert series of relatively free algebras}, Journal of Algebra \textbf{91}
(1984), 1--17.

\bibitem{EKM09}
Pavel Etingof, John Kim, Xiaoguang Ma, \textit{On universal Lie nilpotent associative algebras}, Journal of Algebra \textbf{321} (2009), 697--703. arXiv:0805.1909

\bibitem{FS07}
Boris Feigin, Boris Shoikhet, \textit{On [A,A]/[A, [A,A]] and on a $W_n$-action on the consecutive commutators of free associative algebras}, Mathematical Research Letters \textbf{14} (2007), 781--795. arXiv:math/0610410 

\bibitem{GZbook}
A. Giambruno, M. Zaicev, \textit{Polynomial identities and asymptotic methods}. Mathematical Surveys and Monographs, \textbf{122}. American Mathematical Society, Providence, RI, 2005.

\bibitem{Gordienko07}
A.S. Gordienko, \textit{Codimensions of commutators of length $4$}, Russian Mathematical Surveys \textbf{62} (2007), 187--188.

\bibitem{GTS10}
A.V. Grishin, L.M. Tsybulya, A.A. Shokola, \textit{On $T$-spaces and relations in relatively free Lie nilpotent associative algebras}, Journal of Mathematical Sciences (New York) \textbf{177} (2011), 868--877.

\bibitem{GL83}
Narain Gupta, Frank Levin, \textit{On the Lie ideals of a ring}, Journal of Algebra \textbf{81} (1983), 225--231.

\bibitem{Jennings47}
S.A. Jennings, \textit{On rings whose associated Lie rings are nilpotent}, Bulletin of the American Mathematical Society \textbf{53} (1947), 593--597.

\bibitem{JO13}
David Jordan, Hendrik Orem, \textit{An algebro-geometric construction of lower central series of associative algebras}, arXiv:1302.3992

\bibitem{Kerchev13}
George Kerchev, \textit{On the filtration of a free algebra by its associative lower central series}, Journal of Algebra \textbf{375} (2013), 322--327. arXiv:1101.5741

\bibitem{Kras97}
A.N. Krasil'nikov, \textit{On the semigroup nilpotency and the Lie nilpotency of associative algebras}, Mathematical Notes \textbf{62} (1997), 426--433.

\bibitem{Kras12}
Alexei Krasilnikov, \textit{The additive group of a Lie nilpotent associative ring}, Journal of Algebra \textbf{392} (2013), 10--22. arXiv:1204.2674

\bibitem{Latyshev65}
V.N. Latyshev, \textit{On finite generation of a $T$-ideal with the element $[x_1, x_2, x_3, x_4]$}, Siberian Mathematical Journal \textbf{6} (1965), 1432--1434. (in Russian)

\bibitem{Petrogradsky11}
V.M. Petrogradsky, \textit{Codimension growth of strong Lie nilpotent associative algebras}, Communications in Algebra \textbf{39} (2011), 918--928.

\bibitem{RT99}
D.M. Riley, V. Tasi\'c, \textit{Mal'cev nilpotent algebras}, Archiv der Mathematik (Basel) \textbf{72} (1999), 22--27.

\bibitem{Rowenbook}
L.H. Rowen, \textit{Polynomial Identities in Ring Theory}. Pure and Applied Mathematics, \textbf{84}, Acad. Press, New York-London, 1980.

\bibitem{Volichenko78}
I.B. Volichenko, \textit{The $T$-ideal generated by the element $[x_1, x_2, x_3, x_4]$}, Preprint no. 22, Institute of Mathematics of the Academy of Sciences of the Belorussian SSR, 1978. (in Russian)

\end{thebibliography}
\end{document}